\theoremstyle{plain}
\newtheorem{theorem}{Theorem}[section]
\newtheorem{lemma}[theorem]{Lemma}
\newtheorem{proposition}[theorem]{Proposition}
\newtheorem{corollary}[theorem]{Corollary}
\theoremstyle{definition}
\newtheorem{definition}[theorem]{Definition}
\newtheorem{example}[theorem]{Example}
\newtheorem{remark}[theorem]{Remark}
\theoremstyle{remark}
\numberwithin{equation}{section}
\newcounter{TmpEnumi}
\title{On Topological  Rank of Factors of Cantor Minimal Systems}
\author{Nasser Golestani and Maryam Hosseini}
\address{Department of Pure Mathematics, Faculty of Mathematical Sciences, 
  Tarbiat Modares University,
 Tehran\\ Iran}
 \address{School of Mathematics, Institute for Research in Fundamental Sciences (IPM), P. O. Box 19395-5746, Tehran, Iran}
 \email{n.golestani@modares.ac.ir}
\address{School of Mathematics, Institute for Research in Fundamental Sciences (IPM), P. O. Box 19395-5746, Tehran, Iran}
\email{maryhoseini@ipm.ir}
\begin{document}
\begin{abstract} A Cantor minimal system is of finite topological rank if  it has a Bratteli-Vershik representation whose number of vertices per level is uniformly bounded. We  prove that if the topological rank of a minimal dynamical system on a Cantor set is finite then all its minimal Cantor factors  have finite topological rank as well. This gives an affirmative answer to a question posed by Donoso,  Durand, Maass, and Petite.
\end{abstract}
\maketitle
\section{Introduction}
%
%
%
A {\it Cantor minimal system} is a pair $(X, T)$ where $T$ is a homeomorphism on a Cantor space $X$. Inspired by the definition of  {\it Rokhlin towers} for approximating an {\it ergodic system} by a finite union of towers of {\it measurable sets} \cite{O}, the definition of {\it Kakutani-Rokhlin towers} was created for Cantor minimal systems  \cite{HPS, putnam}.
 So as a topological analogue, a Kakutani-Rokhlin partition for a Cantor minimal system  is a finite union of towers of {\it clopen sets} in which every level of a  tower is mapped onto its upper level up to the top level. The difference in this analogy is that a Kakutani-Rokhlin partition topologically  covers the space $X$ and the dynamical system is generated point-wisely  by a nested sequence of Kakutani-Rokhlin towers that  their bases converge to a point. When the number of towers of each partition in the sequence is uniformly bounded, the associated system is called of {\it finite topological rank,} denoted by ${\rm rank}_{\rm top}(X, T)<\infty$. 

 \smallskip
 Some examples of finite topological rank Cantor minimal systems  are symbolic systems generated by the coding of  interval exchange transformations, Substitution subshifts and  linearly recurrent systems on Cantor sets, \cite{D, DHS, D5, GJ}. It is a folklore that {\it odometers} (minimal isometries  on Cantor sets) are the only  rank one  minimal Cantor systems. 

 \smallskip

 A system of finite topological rank has zero entropy and bounded number of invariant ergodic measures \cite{BKMS}. Moreover, rank of the additive group of {\it continuous spectrum} (that is, the eigenvalues of the Koopman operator $U_T(f)=f\circ T$ defined on $C(X)$) as an abelian group,   is dominated by the topological rank of the system \cite{CDP, GHH}. 
 
 The Kakutani-Rokhlin partitions associated to a (essentially) minimal  Cantor system  $(X, T)$ ``approximates"  that homeomorphism by  a {\it shift} map (up to the top levels of the towers). However, as the system   is eventually  generated   by a  sequence of partitions, it can be far from being a {\it subshift} \cite{GJ1}. 
 A remarkable theorem of T. Downarowicz and A. Maass    \cite{DM} states  that a Cantor minimal system of finite topological rank  is an odometer or a  subshift. The latter holds if the rank is bigger than one.

\smallskip
 A dynamical system on a Cantor set is  subshift if and only if it is expansive \cite{Kurka}. Expansivity  is not  inherited by  the  {\it topological factor} of an expansive system.   For instance, odometers are topological factors of any system with non-trivial  continuous rational spectrum. Even though one can assure that a system is not an isometry it is not sufficient to say that it is expansive (to see an example look at \cite{GJ}). 
 But by the main result of \cite{DM}, this can be guaranteed for a finite rank subshifts if one can show that its  topological factors  have finite rank as well. This is a corollary of  the main result  of this note. We recall that a dynamical system is essentially minimal if it has a unique minimal subsystem.
\vspace{-0.09cm}

\begin{theorem}\label{main}
Let $(X,\,T)$ be an essentially minimal Cantor system of finite topological rank and $(Y,\,S)$ be a minimal system on  a Cantor set such that for some continuous  map $\alpha:X\rightarrow Y$, $\alpha\circ T=S\circ \alpha$. Then 
$${\rm rank}_{\rm top}(Y,\,S)<\infty.$$
\end{theorem}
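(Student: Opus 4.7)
The plan is to construct a Kakutani-Rokhlin sequence for $(Y,S)$ whose number of towers is uniformly bounded, using the given finite-rank Kakutani-Rokhlin sequence for $(X,T)$ and the factor map $\alpha$.

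Set $K := \mathrm{rank}_{\mathrm{top}}(X,T)$ and fix a nested Kakutani-Rokhlin sequence $\{\mathcal{P}_n\}$ for $(X,T)$ with at most $K$ towers at every level and whose atoms generate the clopen topology of $X$. Fix also a decreasing sequence $\{\mathcal{Q}_n\}$ of clopen partitions generating the topology of $Y$. By telescoping, one may assume that $\mathcal{P}_n$ refines $\alpha^{-1}(\mathcal{Q}_n)$ for every $n$. Under this compatibility, each tower $i$ of $\mathcal{P}_n$, with base $B_n^i$ and height $h_n^i$, inherits a well-defined $\mathcal{Q}_n$-itinerary $w_n^i \in \mathcal{Q}_n^{h_n^i}$ that records, for each $0 \le j < h_n^i$, the unique atom of $\mathcal{Q}_n$ containing $\alpha(T^j B_n^i)$. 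These itineraries together with the incidence data between successive $\mathcal{P}_n$'s encode $\alpha$ completely.

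The main step is to manufacture from $\{\mathcal{P}_n\}$, $\alpha$, and the itineraries a Kakutani-Rokhlin sequence $\{\mathcal{R}_n\}$ for $(Y,S)$ whose number of towers is bounded by a function of $K$ alone. The naive choice of $D_n^i := \alpha(B_n^i)$ as bases fails: these sets are only closed (not in general clopen) in $Y$, and the resulting image towers may overlap, so they do not form a partition. To circumvent this, one extracts clopen bases by a return-time analysis in $Y$ to a carefully chosen clopen set adapted to the $\alpha$-image of a union of $B_n^i$'s, exploiting that at each level there are at most $K$ distinct itineraries and that their concatenation across consecutive levels is governed by the Bratteli diagram of $\{\mathcal{P}_n\}$. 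The nesting $\mathcal{P}_{n+1}$ below $\mathcal{P}_n$ transports to a compatible refinement $\mathcal{R}_{n+1}$ below $\mathcal{R}_n$, and since $\mathcal{R}_n$ can be arranged to refine $\mathcal{Q}_n$, the sequence $\{\mathcal{R}_n\}$ generates the topology of $Y$.

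The central obstacle is precisely this extraction of genuinely clopen-based Kakutani-Rokhlin partitions of $Y$ whose widths remain controlled by $K$: the images $\alpha(B_n^i)$ are only closed, overlaps among image towers can be delicate, and one must ensure that the cleaning-up process does not introduce an unbounded number of new towers. The finite-rank hypothesis on $(X,T)$ is used crucially here: boundedly many itineraries at each level give boundedly many concatenation patterns, which caps the number of towers one can be forced to introduce in $\mathcal{R}_n$. Once $\{\mathcal{R}_n\}$ is in hand, $\mathrm{rank}_{\mathrm{top}}(Y,S) < \infty$ follows by definition.
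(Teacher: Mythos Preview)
Your proposal is a strategy outline that correctly sets up the problem and correctly names the central obstacle, but it does not resolve it. The entire content of the theorem lies in the sentence you leave as an assertion: ``boundedly many itineraries at each level give boundedly many concatenation patterns, which caps the number of towers one can be forced to introduce in $\mathcal{R}_n$.'' Having at most $K$ itineraries $w_n^i$ does \emph{not} by itself bound the number of towers needed for a genuine K--R partition of $Y$: when one takes a clopen base in $Y$ and looks at return times, the itineraries $w_n^i$ (viewed as words over $\mathcal{Q}_n$) get cut at uncontrolled positions, and a priori this can produce an unbounded number of distinct sub-words/towers as $n$ grows. Your ``return-time analysis in $Y$ to a carefully chosen clopen set'' is exactly the missing idea, and nothing in the outline says what that set is or why it works.

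The paper's proof shows this step is substantive. Translated into your language, the key point is a periodicity lemma (Lemma~\ref{basis}, via Fine--Wilf): if a word $w$ admits decompositions $w=s_1t_1=\cdots=s_pt_p$ where all $s_i$ are suffixes of a common long word and all $t_i$ are prefixes of another, then \emph{all} the $s_i$'s and $t_i$'s are generated by a set of at most three words. In the Bratteli--Vershik picture, the properness of the ordering (min/max edges coming from fixed vertices) forces exactly this suffix/prefix structure on the cut pieces, and the lemma then yields a new level with at most $3K$ vertices. Without an analogous periodicity argument, there is no mechanism in your outline preventing the number of towers in $\mathcal{R}_n$ from blowing up; the finite-rank hypothesis on $(X,T)$ alone is not enough.
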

Indeed, we prove that ${\rm rank}_{\rm top}(Y, S)\leq 3\, {\rm rank}_{\rm top}(X, T)$.
For the special case that $(X, T)$ is minimal  and $(Y, S)$  is subshift, there has been recently a proof to show that ${\rm rank}_{\rm top}(Y, S)<\infty$ in \cite{bastian}. However,  we don't have any condition on $(Y, S)$ here except that it is a Cantor factor of $(X, T)$. Moreover, minimality of $(X, T)$ is not necessary in our proof.

Theorem \ref{main}  answers  Question 8.4 posed in \cite{DMPD} affirmatively. 
Moreover, one can get the dichotomy  of  the main result of \cite{DM} for 
the Cantor factors of finite topological rank Vershik systems.

\begin{corollary}\label{main1}
Every minimal Cantor factor of a finite topological rank essentially minimal 
  system on a compact totally disconnected metrizable space  is an odometer or a subshift.
\end{corollary}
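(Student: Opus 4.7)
The plan is to derive the corollary as an immediate combination of Theorem \ref{main} with the Downarowicz--Maass dichotomy from \cite{DM}, which states that a Cantor minimal system of finite topological rank is either an odometer (when the rank is one) or a subshift (when the rank exceeds one). Given a minimal Cantor factor $(Y,S)$ of an essentially minimal system $(X,T)$ of finite topological rank, the first step is to apply Theorem \ref{main} to obtain ${\rm rank}_{\rm top}(Y,S)<\infty$, and the second step is to invoke the dichotomy in \cite{DM} to classify $(Y,S)$ as either an odometer or a subshift.

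The only delicate point is that Theorem \ref{main} is stated for $(X,T)$ a Cantor system, whereas in the corollary $X$ is merely assumed to be compact totally disconnected metrizable. The natural fix is to restrict to the unique minimal subset $M\subseteq X$. Since $\alpha(M)$ is a closed $S$-invariant subset of the minimal system $(Y,S)$, minimality forces $\alpha(M)=Y$. If $M$ were finite then $Y=\alpha(M)$ would be finite as well, contradicting that $Y$ is a Cantor space; hence $M$ is infinite and, being a perfect compact totally disconnected metrizable space, is itself a Cantor set. Moreover, $(M,T|_M)$ inherits a Bratteli--Vershik representation from that of $(X,T)$ with no increase in the number of vertices per level, so ${\rm rank}_{\rm top}(M,T|_M)\leq{\rm rank}_{\rm top}(X,T)<\infty$. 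Applying Theorem \ref{main} to the Cantor minimal system $(M,T|_M)$ and its Cantor minimal factor $(Y,S)$ (via $\alpha|_M$) yields ${\rm rank}_{\rm top}(Y,S)<\infty$, and the Downarowicz--Maass dichotomy finishes the argument.

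The substantive mathematical content lies entirely in Theorem \ref{main}; the corollary itself is a short citation plus a routine reduction to the Cantor minimal setting. The only real point worth double-checking is the claim that the minimal subsystem of an essentially minimal finite-rank system again has finite topological rank, which is standard via the Bratteli--Vershik formalism but should be stated explicitly.
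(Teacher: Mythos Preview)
Your proposal is correct and matches the paper's intended argument: the corollary is stated there without a separate proof, as an immediate consequence of Theorem~\ref{main} together with the Downarowicz--Maass dichotomy \cite{DM}. Your reduction to the unique minimal subsystem $M$ (to bridge the gap between ``compact totally disconnected metrizable'' in the corollary and ``Cantor'' in Theorem~\ref{main}) is a detail the paper leaves implicit; the rank bound ${\rm rank}_{\rm top}(M,T|_M)\leq{\rm rank}_{\rm top}(X,T)$ is correct and follows most directly by intersecting a generating sequence of K-R partitions of $(X,T)$ with $M$, which is perhaps cleaner than the Bratteli--Vershik phrasing you sketch.
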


 Recently, $S$-adic subshifts, that are Cantor systems generated by a sequence of morphisms between finite alphabets, have also been studied by  people who are interested in ``approximating" finite rank minimal subshifts with primitive substitutions \cite{DL, DMPD}. In \cite[Theorem 4.1]{DMPD} the authors proved that every minimal $S$-adic subshift with {\it bounded alphabet rank } is a topological factor of a finite topological rank minimal Cantor system. Combining this result with Theorem \ref{main} one can say that
 \begin{corollary}
 Any minimal $S$-adic subshift with bounded alphabet rank is conjugate to a subshift of finite topological rank.
  \end{corollary}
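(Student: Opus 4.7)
The plan is straightforward: combine the main theorem of this paper (Theorem \ref{main}) with the previously known result \cite[Theorem 4.1]{DMPD} that is invoked in the paragraph just above the statement.

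Concretely, I would start from a minimal $S$-adic subshift $(Y,S)$ of bounded alphabet rank. First, I would apply \cite[Theorem 4.1]{DMPD} to produce a minimal Cantor system $(X,T)$ with ${\rm rank}_{\rm top}(X,T)<\infty$ together with a continuous factor map $\alpha\colon X\to Y$ satisfying $\alpha\circ T = S\circ\alpha$. (In the degenerate case in which $(Y,S)$ is a finite periodic orbit, the statement is trivial since ${\rm rank}_{\rm top}(Y,S)=1$; otherwise $Y$ is an infinite minimal subshift and therefore sits on a Cantor set, so Theorem \ref{main} applies on the nose.)

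Next, I would feed this factor map into Theorem \ref{main}: since $(X,T)$ is a minimal (hence essentially minimal) Cantor system of finite topological rank and $(Y,S)$ is a minimal Cantor system, the theorem yields ${\rm rank}_{\rm top}(Y,S)<\infty$. Because $(Y,S)$ is by hypothesis already a subshift, this exhibits it as a subshift of finite topological rank, which is the claimed conclusion (up to conjugacy, trivially).

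I do not expect any real obstacle here. The entire novelty of the corollary lies in Theorem \ref{main}: on its own, \cite[Theorem 4.1]{DMPD} only realizes $(Y,S)$ as a factor of a finite-rank system, with no information about the rank of $(Y,S)$ itself; conversely, Theorem \ref{main} on its own says nothing about $S$-adic subshifts. The only thing to be careful about is the trivial check that the target of the factor map is a Cantor system, which is automatic for an infinite minimal subshift.
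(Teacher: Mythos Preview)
Your proposal is correct and follows exactly the approach indicated in the paper: the corollary is obtained by combining \cite[Theorem~4.1]{DMPD} with Theorem~\ref{main}, just as you outline. The paper gives no further details beyond that one-sentence combination, so your write-up, including the trivial check that an infinite minimal subshift lives on a Cantor set, is if anything slightly more careful than the original.
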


\smallskip

%
To prove Theorem \ref{main}, we use Bratteli-Vershik representations of $(X, T)$ and $(Y, S)$. Then we apply the representation of the factor map $\alpha$ in terms of an
 {\it  ordered premorphism} between the associated ordered Bratteli diagrams. This notion has been recently defined in \cite{AEG}. 
  Here we will show how the existence of an ordered premorphism from an ordered Bratteli diagram $B_1$  to $B_2$  may reduce  the  number of vertices of  levels of $B_1$ needed to construct an ordered Bratteli diagram equivalent to $B_1$ whose rank is dominated by $3\,{\rm rank}(B_2)$.  
 \medskip
 
The paper naturally starts by recalling basic definitions and tools in Section 2. In Section 3 the notion of an ordered premorphism is defined via a sequence of morphisms and the main result of \cite{AEG} that is the main tool of this paper is presented. Then in Section 4, Theorem \ref{main}  is proved. In Section 5, a combinatorial condition is given for verification of conjugacy

%
%
%

\section{Preliminaries}

\subsection{Topological Dynamical Systems on Cantor Set.}
A {\it topological dynamical system} is a pair $(X, T)$ where $X$ is a compact metric space and $T$ is an onto homeomorphism of $X$. 
If $Z$ is an invariant closed subset of $X$,
then $(Z,T)$ is called a {\it subsystem.}
The orbit of a  point $x\in X$, denoted by ${\mathcal O}(x)$, is the set $\{T^nx\}_{n\in\mathbb{Z}}$. If $X$ is a Cantor space (that is, a nonempty compact metrizable totally disconnected space with no isolated points)
then the system is called a {\it Cantor system}. Two topological dynamical systems $(X, T)$ and $(Y, S)$ are  {\it semi-conjugate} if there exists a continuous map $\alpha: X\rightarrow Y$ such that $\alpha\circ T=S\circ \alpha$. In this case $(Y, S)$ is called a {\it factor} of $(X, T)$, and $(X, T)$ is called an extension of $(Y, S)$.
\medskip

When $X=A^\mathbb Z$ where $A$ is a finite alphabet of cardinality $n\geq 2$ and $X$ is equipped with the compact  product topology, that makes $X$ homeomorphic to the Cantor set, together with
the shift map $T$  acting on the bi-infinite sequences of $X$, then $(X, T)$ is called a {\it shift} system. 
Every subsystem of a shift is called a {\it subshift} system.

For a topological dynamical system $(X, T)$ if the orbits of all points  are dense
in $X$, then the system is called {\it minimal}. This is equivalent to the absence of
non-trivial invariant closed subsets. When  $(X, T)$  has a unique minimal subsystem,
 the system is called {\it essentially minimal}. Every essentially minimal system on a Cantor set has  realizations by sequences $\{\mathcal T_n\}_{n\geq 1}$ of {\it Kakutani-Rokhlin} (briefly called {\it K-R}) {\it partitions} \cite{HPS}. Each K-R partition $ \mathcal T_n=\cup_{i=1}^k\cup_{j=1}^hB_{ij}$, is a finite union of towers, $\cup_{j=1}^hB_{ij}$, of clopen sets, $B_{ij}$, so that $T(B_{ij})=B_{ij+1}$ for $j<h$. The {\it base} of the tower is $\cup_{i=1}^kB_{i1}$. The towers construction is based on the first return time of the points of bases to them. 
 When the number of towers in $\mathcal T_n$ of a  sequence $\{\mathcal T_n\}_{n\geq 1}$ is uniformly bounded,
 that is, ${\rm max}_n \#{\mathcal T}_n=d<\infty$, the system is said to be of {\it finite 
 topological rank},
 and the minimum such $d$ is called the rank of $(X,T)$ \cite{DMPD}. 
We use the notation
  ${\rm rank}_{\rm top}(X, T)=d$.
\medskip

\medskip

\subsection{Ordered Bratteli Diagrams.}
A {\it Bratteli diagram} is an infinite directed graph $B=((V_i)_{i\geq 0},\, (E_i)_{i\geq 1})$ where $V=\dot{\bigcup}_{i\geq 0} V_i$ is the set of vertices with $V_0=\{v_0\}$ and for each $i\geq 1$, $E_i$ is the edges between $V_{i-1}$ and $V_{i}$.  
Each $V_{i}$ and each $E_i$ is a finite 
nonempty set.
There are two maps $r, s: E\rightarrow V$, called the range and the source maps respectively, with $r(E_i)\subset V_i$ and $s(E_i)\subset V_{i-1}$. A vertex $v\in V_i$ is connected to a vertex $w\in V_{i-1}$ if there exists an edge $e\in E_i$ such that $r(e)=v$ and $s(e)=w$. In this way, for each $n\geq 1$ there is a $|V_n|\times |V_{n-1}|$ incidence matrix $A_n$ whose entry $a_{ij}$ counts the number of edges between $v_i\in V_n$ and $w_j\in V_{n-1}$. 
We assume that every row and every column of each $A_n$ 
is nonzero.

For $m,n\geq 0$ with
$m<n$, let $E_{m,n}$ be the set of
finite paths from $V_{m}$ to $V_{n}$,
that is, $E_{m,n}$ consists of the tuples
$(e_{m+1},\ldots,e_{n})$
 where $ e_{i}\in E_{i}$ for $ i=m+1,\ldots,n$, and $r(e_{i})=s(e_{i+1})$
 for $i=m+1,\ldots,n-1$.
In particular,  $E_{m,m}=\{(v,v)\mid v\in V_{m}\}$  is an edge set from $V_{m}$ to itself.

For a strictly increasing sequence of integers $n=\{n_k\}_{k\geq 0}$
with $n_0=0$, one can define the {\it  telescoping} of the diagram along $n$ by defining a new Bratteli diagram $B'=((V'_i)_{i\geq 0},\, (E'_i)_{i\geq 1})$ in which for every $i\geq 1$, $V'_i=V_{n_i}$, $E'_i=E_{n_i, n_{i+1}}$,
and $V'_0=V_0$. So the incidence matrices of $B'$ are $A'_i= A_{n_i}\times A_{n_{i}-1}\times\cdots\times A_{n_{i-1}+1}$. A Bratteli diagram is called {\it  simple} if there exists a telescoping of that along a sequence such that all the incidence matrices have just positive entries. 
\medskip

An {\it ordered Bratteli diagram,} $B=((V_i)_{i\geq 0},\, (E_i)_{i\geq 1}, \geq)$, is a Bratteli
diagram
$((V_i)_{i\geq 0},\, (E_i)_{i\geq 1})$ together
 with a partial ordering on the set of its edges in which two edges
$e$ and $e'$ are comparable if and only if $r(e)=r(e')$. In fact, for every $n\geq 1$
 and every $v\in V_n\setminus V_0$, 
$r^{-1}(v)$ is linearly ordered.  For each $v$, the edge
with the largest (smallest) number in the ordering of $r^{-1}(v)$ is called the {\it max
edge\/} ({\it min edge\/}). For every telescoping of $B$ there is an induced ordering on the edge set. In fact, 
 $(e_{k+1}, e_{k+2},\dots,e_l)>(f_{k+1}, f_{k+2},\dots,f_l)$ as two finite paths in $E_{k,\ell}$
if $r(e_\ell)=r(f_\ell)$ and  there exists some $i$ with $k+1\leq i\leq l$ such that for all
$i<j\leq l$, $e_j=f_j$ and  $e_i>f_i$.

Let $B=((V_i)_{i\geq 0},\, (E_i)_{i\geq 1}, \geq)$ be an ordered  Bratteli diagram. The set of {\it infinite paths} is 

$$
X_B=\{(e_1, e_2,\dots):\ \ e_i\in E_i,\ r(e_i)=s(e_{i+1}),\ \
i=1,2,\dots\}.
$$
Two paths are {\it cofinal\/} if  all but finitely many of  their edges agree.  The set $X_B$ is equipped with  the usual compact product topology that its basis consists of  cylinder sets  of the from
$$
U(e_1, e_2,\dots,e_k)=\{(f_1, f_2, \dots)\in X_B:\  f_i=e_i,\ 1\leq
i\leq k\}.
$$
 $X_B$ is a compact Hausdorff space with a
countable basis consisting of clopen sets and is homeomorphic to the Cantor set if it is infinite
and $B$ is simple. Let $X_B^{ \max}$ denote the set of
all those elements $(e_1, e_2, \dots)$  in $X_B$ such that each  $e_n$ is
a max edge, and define $X_B^{\min}$ analogously.  An ordered Bratteli
diagram is  called {\it properly ordered\/} if it is simple  and $X_B^{\max}$ and $X_B^{\min}$ each contains only one
element; when this occurs, the max and min paths are  denoted
$x_{\max}$ and $x_{\min}$ respectively. For any Bratteli diagram, there
exists an ordering which makes it   properly ordered  \cite{HPS}.

\medskip

Let $B=((V_i)_{i\geq 0},\, (E_i)_{i\geq 1}, \geq)$ be a properly ordered Bratteli diagram. The {\it Vershik\/
{\rm(or} adic\/{\rm)} map} is the homeomorphism $\varphi_B:
X_B\rightarrow X_B$ wherein    $\varphi_B(x_{\max})=x_{\min}$, and for any other
point $(e_1, e_2,\dots)\neq x_{\max}$,  the map sends the path to its successor \cite{HPS}; in particular,  let $k$ be the smallest number that
$e_k$ is not a max edge and let $f_k$ be the immediate successor of $e_k$. Then 
$\varphi_B(e_1, e_2,\dots)=(f_1, \dots, f_{k-1},f_k,e_{k+1},e_{k+2},\dots)$,
where $(f_1,\dots, f_{k-1})$ is the min path in $E_{0, k-1}$ having the range $s(f_k)$.

Using Kakutani-Rokhlin partitions for  Cantor minimal systems, Herman, Putnam and Skau 
proved that:

\begin{theorem}[\cite{HPS}]\label{HPS}
Let $(X,\,T)$ be a Cantor minimal system. Then $T$ is
topologically conjugate to a Vershik map on a Bratteli compactum $X_B$  of
a simple properly ordered Bratteli diagram $B$. Furthermore, given
$x_0\in X$ we may choose the conjugating map $f:X\rightarrow X_B$ so that
$f(x_0)$ is the unique infinite max path in $B$.
\end{theorem}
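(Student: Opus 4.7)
The plan is to represent $T$ combinatorially via a nested refining sequence of Kakutani--Rokhlin partitions of $X$, encode this data as an ordered Bratteli diagram, and verify that the natural coding map intertwines $T$ with the Vershik map. Starting from the prescribed point $x_0\in X$, I would build a sequence of KR partitions
\[
\mathcal T_n=\bigcup_{v\in V_n}\bigcup_{j=1}^{h_n(v)} B^{(n)}_{v,j}
\]
satisfying: (i) $\mathcal T_{n+1}$ refines $\mathcal T_n$ in such a way that each tower of $\mathcal T_{n+1}$ climbs through towers of $\mathcal T_n$; (ii) the bases $\bigcup_v B^{(n)}_{v,1}$ form a decreasing sequence of clopen neighborhoods of $x_0$ with intersection $\{x_0\}$; (iii) $x_0$ lies in the top level of some tower of $\mathcal T_n$ for every $n$; (iv) the atoms of $\bigcup_n \mathcal T_n$ generate the topology of $X$. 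Such a sequence is produced inductively: at stage $n$, one takes the KR partition induced by the first return map of $T$ to a sufficiently small clopen neighborhood of $x_0$, chosen to separate enough sets from a fixed countable clopen basis of $X$.

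From $\{\mathcal T_n\}$ I would then define a Bratteli diagram $B$ by setting $V_0=\{v_0\}$, letting $V_n$ index the towers of $\mathcal T_n$, and putting in $E_n$ an edge from $w\in V_{n-1}$ to $v\in V_n$ for each level of the $v$-tower that sits inside the $w$-tower of $\mathcal T_{n-1}$; the order on $r^{-1}(v)$ is the chronological order in which these levels are visited as one climbs the $v$-tower from bottom to top. Minimality of $(X,T)$ forces every entry of a sufficiently deep telescoped incidence matrix to be strictly positive, so $B$ is simple, and condition (iii) guarantees that the infinite path following the topmost levels through $x_0$ is the unique max path $x_{\max}$. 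I would define the coding map $f:X\to X_B$ by sending $x$ to the path whose first $n$ edges record the tower of $\mathcal T_n$ containing $x$ together with its sequence of sub-locations in the coarser partitions; property (iv) and simplicity of $B$ make $f$ a homeomorphism, and the defining identity $T(B^{(n)}_{v,j})=B^{(n)}_{v,j+1}$ for $j<h_n(v)$ translates directly into $f\circ T=\varphi_B\circ f$ at every non-max point, while the wrap-around $\varphi_B(x_{\max})=x_{\min}$ handles the top-to-bottom jump.

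The main obstacle is arranging that $B$ is \emph{properly} ordered: condition (iii) gives a unique max path, but uniqueness of the min path is not automatic from the construction sketched above. I would handle this by a diagonal refinement in which, at each stage, the neighborhood defining the next base is chosen small enough to pin down a second distinguished orbit (for instance the forward orbit of some $y_0\neq x_0$) that passes through a uniquely determined tower of $\mathcal T_n$ at the bottom level, thereby forcing $X_B^{\min}$ to be the singleton $\{f(y_0)\}$; alternatively, one can invoke the combinatorial fact from \cite{HPS} that any simple Bratteli diagram admits a compatible proper ordering and check that our construction can be locally re-ordered without destroying (iii). The remaining verifications --- continuity, injectivity, $T$-equivariance of $f$, and identification of $f(x_0)$ with $x_{\max}$ --- are then bookkeeping from the KR structure and the successor rule of the Vershik map.
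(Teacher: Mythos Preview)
The paper does not give its own proof of this theorem: it is quoted as a result of Herman, Putnam, and Skau \cite{HPS} and used as a black box, so there is nothing to compare your argument against here. Your sketch is, in outline, the standard HPS construction via nested Kakutani--Rokhlin partitions, and it is essentially correct.

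That said, there is one genuine slip worth fixing, and it happens to dissolve what you call the ``main obstacle.'' Your conditions (ii) and (iii) are mutually incompatible: if the bases $\bigcup_v B^{(n)}_{v,1}$ shrink to $\{x_0\}$, then $x_0$ lies in the \emph{bottom} level of some tower of every $\mathcal T_n$, and it cannot simultaneously lie in the top level once the tower heights exceed~$1$. The correct normalization is to let the bases shrink to $\{T x_0\}$; then $x_0$ automatically lies in the top level of every $\mathcal T_n$, and the top levels shrink to $\{x_0\}$. With this choice, both extremal paths are forced to be unique by the very same mechanism: the max path is the coding of $x_0$ (top levels shrink to a point) and the min path is the coding of $T x_0$ (bases shrink to a point). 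So there is no need to pin down a second distinguished orbit or to re-order the diagram after the fact; proper ordering comes for free once (ii) is stated correctly.
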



\subsection{S-adic Representation of Minimal Subshifts.} 
One of the models to represent a minimal system of finite topological rank is the {\it $S$-adic} representation \cite{DL, DMPD}. We use the definitions of \cite{DMPD}. 

Let $\{A_n\}_{n\geq 0}$   be a sequence of finite alphabets and suppose that $\tau=(\tau_n:A_{n+1}\rightarrow A_n^*)_{n\geq 0}$ is a {\it directive sequence} of {\it morphisms} such  that for every $a\in A_{n+1}$, $\tau_n(a)$ is not the empty word. Then  there is a sequence of $|A_{n+1}|\times |A_n|$ matrices $M_n$ (also denoted by $M_{\tau_n}$) so that for every $n\geq 0$ each entry  $(M_n)_{ij}$ counts the number of occurrences of the $j$th letter of $A_n$ in $\tau_n(a_i)$, $a_i\in A_{n+1}$. When all the matrices are positive the sequence of morphisms is called {\it positive}. $\tau$ is {\it proper} if every $\tau_n$ is proper and the latter means that for every $n$, there exists letters $a, b$  in $A_n$ such that for all $c\in A_{n+1}$, $\tau_n(c)$ starts with $a$ and ends up with $b$. Moreover, $\tau$ is called {\it primitive} if for every $n\geq 1$ there exists some $N\geq n$ such that $M_{\tau_{[n,N)}}>0$ where $\tau_{[n,N)}=\tau_n\circ \tau_{n+1}\circ\cdots\circ\tau_{N-1}$. For every $n\geq 0$ let
$$\mathcal L^{(n)}(\tau)=\big\{w\in A_n^*: \ \ \exists \, N>n \ \, \exists\, a\in A_N \ \ w \ {\rm occurs \ in\ } \tau_{[n,N)}(a)\big\}.$$
Suppose that $X_\tau^{(n)}$ is the set of points $x\in A_n^{\mathbb Z}$ so that all the factors of $x$ belong to $\mathcal L^{(n)}(\tau)$. Then $(X_\tau^{(n)}, \sigma)$ is a subshift and if $\tau$ is primitive 
it will be a minimal susbshift.  The minimal subshift  $(X, \sigma):=(X_\tau^{(0)}, \sigma)$ is called the {\it $S$-adic subshift } generated by the directive sequence $\tau$.
\medskip

Let $B=((V_k)_{k\geq 0},\, (E_k)_{k\geq 1},\geq)$ be an  ordered Bratteli diagram.   
There exists a sequence of morphisms $\sigma=(\sigma_i^B: V_i\rightarrow V_{i-1}^*)_{i\geq 1}$ defined by, for $i\geq 2$, 
\[
\sigma_i^B(v)=s(e_1(v))s(e_2(v))\cdots s(e_k(v)),
\]
 where $\{e_j(v)\colon \ j=1, \ldots, k(v)\}$ is the ordered set of edges
in $E_{i}$ with range $v$ and for $i=1$, $\sigma^B_1: V_1^*\rightarrow E_1^*$, $\sigma_1^B(v)=e_1(v)\cdots e_\ell(v)$ where $e_1(v), \dots, e_\ell(v)$ are all the edges with range $v$.

 Each $\sigma_i^B$
 extends to $V_{i}^{*}$ by concatenation. For every $i, j\in\mathbb{N}$ with $i<j$, we define $\sigma_{[i,j]}:V_j^*\rightarrow V_i^*$ by $\sigma_{[i,j]}=\sigma_{i+1}\circ\sigma_{i+2}\circ\cdots\circ\sigma_{j}$.
Also, let $\sigma_{[i,i]}:V_i^*\rightarrow V_i^*$ be the identity map.

 \smallskip
\begin{proposition}[\cite{DMPD}, Proposition 4.6]\label{sadic}
Let $(X,T)$ be a minimal Cantor system given by a Bratteli-Vershik representation $B$. If $(X,T)$ is subshift, then, after an appropriate telescoping, the $S$-adic subshift generated by the sequence of morphisms
$\sigma^B =(\sigma_n^B : V_{i}\rightarrow V_{i-1}^*)_{i\geq 1}$ read on $B$ is  conjugate to $(X, T)$.
\end{proposition}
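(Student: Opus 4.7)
\bigskip
\noindent\textbf{Proof proposal.}

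The plan is to build a natural coding map from the Bratteli compactum to $E_1^{\mathbb{Z}}$, identify its image with the $S$-adic subshift generated by $\sigma^B$, and then use the hypothesis that $(X,T)$ is a subshift to upgrade this factor map to a conjugacy after telescoping. First, define $\pi \colon X_B \to E_1^{\mathbb{Z}}$ by letting $\pi(x)_n$ be the first edge of $\varphi_B^{n}(x)$. Since the cylinders $U(e_1)$ with $e_1\in E_1$ form a finite clopen partition of $X_B$, $\pi$ is continuous, and by construction $\pi\circ\varphi_B=\sigma\circ\pi$, where $\sigma$ is the shift. Thus $\pi$ is a factor map from $(X_B,\varphi_B)$ onto a subshift $Y\subseteq E_1^{\mathbb{Z}}$.

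Next I would show that $Y$ coincides with the $S$-adic subshift $X_{\sigma^B}^{(0)}$ generated by the directive sequence $\sigma^B$. The key combinatorial observation is that the Kakutani-Rokhlin partition associated to level $n$ of $B$ has towers indexed by $V_n$; for $v\in V_n$ the tower over $v$ has height equal to the number of paths from $v_0$ to $v$, and these paths are visited in the Vershik order as $\varphi_B$ is iterated from the base to the top. By the very definition of $\sigma_i^B$, reading off the first edges of the paths encountered along the tower over $v$ produces precisely the word $\sigma_1^B\circ\sigma_2^B\circ\cdots\circ\sigma_n^B(v)\in E_1^*$. Therefore every such word appears as a factor of some element of $Y$, and conversely every factor of an element of $Y$ appears inside some $\sigma_1^B\circ\sigma_{[1,n]}^B(v)$ for $n$ large; hence $\mathcal{L}(Y)=\mathcal{L}^{(0)}(\sigma^B)$, so $Y=X_{\sigma^B}^{(0)}$.

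The remaining step, where telescoping enters, is to prove that $\pi$ is injective. Fix a conjugacy $h\colon(X,T)\to(X_B,\varphi_B)$ coming from Theorem~\ref{HPS}, and let $A$ be the alphabet of the subshift $(X,T)\subseteq A^{\mathbb{Z}}$. The zero-coordinate map $X\to A$, $x\mapsto x_0$, is continuous, so $c:=(\cdot)_0\circ h^{-1}\colon X_B\to A$ is continuous from a compact totally disconnected space to a finite set. Hence $c$ is locally constant, and by compactness there exists $N$ such that $c$ is constant on every cylinder $U(e_1,\ldots,e_N)$. After telescoping $B$ along $0<N<2N<\cdots$, we may assume $N=1$, so that $c$ is constant on each $U(e_1)$ and therefore factors through a surjection $E_1\to A$. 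Composing this map coordinatewise with $\pi$ recovers $h$, i.e.\ $\pi(x)$ determines every letter of $h(x)\in A^{\mathbb{Z}}$. Consequently $\pi$ is injective, and being a continuous equivariant bijection between Cantor systems, it is a conjugacy between $(X_B,\varphi_B)\cong (X,T)$ and the $S$-adic subshift generated by $\sigma^B$.

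The main obstacle I anticipate is the telescoping step in the last paragraph, or rather its interaction with the earlier identification of $Y$ with $X_{\sigma^B}^{(0)}$: one must verify that telescoping $B$ does not change the underlying $S$-adic subshift, so that the injectivity established after telescoping still refers to the subshift advertised in the statement. Provided one checks that telescoping corresponds to composing adjacent morphisms in $\sigma^B$ and hence leaves $X_{\sigma^B}^{(0)}$ unchanged, the three paragraphs combine to give the proposition.
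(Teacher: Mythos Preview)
The paper does not give a proof of this proposition; it is quoted verbatim from \cite[Proposition~4.6]{DMPD} as a preliminary fact, and no argument for it appears anywhere in the present paper. So there is nothing here to compare your proof against.

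That said, your sketch is the standard one and is essentially correct. The worry you raise in the final paragraph is not actually an obstacle. The statement only claims that \emph{after an appropriate telescoping} the $S$-adic subshift read on the (telescoped) diagram is conjugate to $(X,T)$; it does not assert that the $S$-adic subshift is invariant under telescoping. Telescoping replaces $E_1$ by $E_{0,N}$ and hence changes the alphabet of $X_{\sigma^B}^{(0)}$, but that is precisely what the statement permits: you first telescope so that the zero-coordinate map $c\colon X_B\to A$ factors through the new $E_1$, and then your coding map $\pi$ is injective onto the $S$-adic subshift of the \emph{new} diagram, which is exactly what is being asserted. Thus your three paragraphs already combine to give the proposition, and the caveat at the end can simply be dropped. (A minor slip: composing the induced map $E_1\to A$ coordinatewise with $\pi$ recovers $h^{-1}$, not $h$; this does not affect the argument.)
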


\subsection{Fine-Wilf Theorem}
An elementary fact that we 
need in the sequel is a  form of the well-known Fine-Wilf theorem \cite{FW} that 
follows easily from an induction argument and the original form of that  theorem,
and we state it here for the sake of completeness. 
Let us first recall that if  $A$ is an alphabet and if
$w=w_{1} \cdots w_{n}$ with  $w_{1},\ldots, w_{n}\in A$, then
$w$  is called \emph{periodic} with period $p\leq n$ whenever  
 $w_i=w_{i+p}$,    for every $1\leq i\leq n-p$.

\begin{lemma}\label{lem_FW}
Let $A$ be a finite alphabet and
let $k\in\mathbb{N}$. If $w\in A^*$ has periods $p_1, p_2, \dots, p_k$ 
such that $|w|\geq p_1+ p_2+\cdots+ p_k -{\rm gcd}(p_1, p_2, \dots, p_k)$, then $w$ is periodic 
with period ${\rm gcd}(p_1, p_2, \dots, p_k)$.
\end{lemma}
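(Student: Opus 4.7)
The plan is induction on $k$, using the classical two-period Fine–Wilf theorem as the base of the induction. For $k=1$ the hypothesis reduces to $|w|\geq 0$ and $w$ trivially has period $p_1=\gcd(p_1)$. For $k=2$, the statement is exactly the usual Fine–Wilf theorem, which I would take as known.

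For the inductive step, suppose the lemma holds for any collection of at most $k-1$ periods and suppose $w\in A^*$ has periods $p_1,\ldots,p_k$ with $|w|\geq p_1+\cdots+p_k-d$, where $d=\gcd(p_1,\ldots,p_k)$. Set $d'=\gcd(p_1,\ldots,p_{k-1})$, so that $d=\gcd(d',p_k)$. First I would apply the inductive hypothesis to the periods $p_1,\ldots,p_{k-1}$ to conclude that $w$ has period $d'$. This requires checking $|w|\geq p_1+\cdots+p_{k-1}-d'$, which follows from the given bound because $p_k\geq d$ and $d'\geq d$, so
\[
p_1+\cdots+p_k-d \;=\; \bigl(p_1+\cdots+p_{k-1}-d'\bigr)+\bigl(p_k+d'-d\bigr) \;\geq\; p_1+\cdots+p_{k-1}-d'.
\]

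Now $w$ has the two periods $d'$ and $p_k$, so I would invoke the classical Fine–Wilf theorem (the $k=2$ case) to obtain that $w$ has period $\gcd(d',p_k)=d$. The hypothesis there requires $|w|\geq d'+p_k-d$. Since $d'\mid p_i$ for every $i\leq k-1$ and each $p_i$ is positive, one has $p_1+\cdots+p_{k-1}\geq d'$, so
\[
|w|\;\geq\; p_1+\cdots+p_k-d \;\geq\; d'+p_k-d,
\]
and the hypothesis of Fine–Wilf is satisfied. This completes the induction.

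There is no real obstacle here; the argument is a straightforward reduction to the two-period case, and the only thing to be careful about is the inequality bookkeeping in the inductive step, namely verifying that the slack $p_k-d\geq 0$ and $d'\geq d$ is enough to convert the length bound at level $k$ into the length bound at level $k-1$, and similarly that $p_1+\cdots+p_{k-1}\geq d'$ gives the bound required by the classical Fine–Wilf theorem.
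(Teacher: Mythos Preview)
Your proof is correct and follows exactly the route the paper indicates: the paper does not spell out a proof but simply remarks that the lemma ``follows easily from an induction argument and the original form of that theorem,'' which is precisely the induction on $k$ reducing to the classical two-period Fine--Wilf theorem that you carry out. Your inequality bookkeeping is accurate, so there is nothing to add.
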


\section{Factoring and Bratteli Diagrams}
Let $(X, T)$ and $(Y, S)$ be two Cantor minimal systems such that for some continuous map $\alpha: X\rightarrow Y$, $\alpha\circ T=S\circ \alpha$. It is natural to realize this relation between the two systems in terms of a relation between their associated  Bratteli diagrams. For  the special case that $\alpha$ is almost one-to-one (that
is, $\alpha$ is one-to-one for a generic point in $X$) some  characterization  have been proved in  \cite{sugisaki}. For the general case, 
 in \cite{AEG} the authors defined the notion of
 an {\it ordered premorphism} between two ordered Bratteli diagrams  and proved the equivalence of the existence of an ordered premorphism between two properly ordered Bratteli diagrams with the existence of a factor map  between their associated Bratteli-Vershik systems. Here for the sake of completeness and to be more precise we first recall the definition of an ordered premorphism and  Proposition 4.6 of \cite{AEG}  and  then, to be prepared for the proof of Theorem \ref{main},  we will show how such an ordered premorphism  induces a sequence of morphisms between the two diagrams. There will be an example at the end of the section. 
  
 \begin{definition}[\cite {AEG}, Definition 3.1]\label{defpreobd}
Let $B_1=(V,E,\geq)$ and $B_2=(W, S,\geq')$
be  ordered Bratteli diagrams.
By an \emph{ordered} \emph{premorphism}
(or just a \emph{premorphism}
if there is no confusion) $f: B_1\to B_2$ we mean
 a triple $(F, (f_{n})_{n=0}^{\infty},\geq )$ where
$(f_{n})_{n=0}^{\infty}$ is a cofinal
(i.e., unbounded)
 sequence of  positive integers with
$f_{0}=0\leq f_{1}\leq f_{2}\leq\cdots$,
 $F$ consists of a disjoint union
 $F_{0}\cup F_{1}\cup F_{2}\cup\cdots$ together with a pair of range and source maps
 $r:F\to W$, $s:F\to V$, and
  $\geq$ is a partial ordering on $F$ such that:

\begin{enumerate}
\item\label{defprebd_it0}
 each $F_{n}$ is a non-empty finite set, $s(F_{n})\subseteq V_{n}$,
 $r(F_{n})\subseteq W_{f_{n}}$,  $F_{0}$
 is a singleton,
$s^{-1}\{v\}$ is non-empty for all $v$ in $V$, and
$r^{-1}\{w\}$ is non-empty for all $w$ in $W$;
\item\label{defpreobd_it1}
$e,e'\in F$ are comparable if and only if $r(e)=r(e')$, and $\geq$ is a linear order
on
$r^{-1}\{w\}$, for
all $w\in W$;

\item\label{defpreobd_it2}
 the diagram of $f:B_1\to B_2$,

  \[
\xymatrix{V_{0}\ar[r]^{E_{1}}\ar[d]_{F_{0}}
 &V_{1}\ar[r]^-{E_{2}}\ar[d]_{F_{1}} &V_{2}\ar[r]^-{E_{3}}\ar[d]_{F_{2}} &\cdots \ \ \  \\
 W_{f_{0}}\ar[r]_{S_{f_{0},f_{1}}}
 &W_{f_{1}}\ar[r]_{S_{f_{1},f_{2}}} &W_{f_{2}}\ar[r]_{S_{f_{2},f_{3}}}&\cdots \   ,
 }
\]
commutes.
The ordered commutativity of the diagram of $f$  means that
for each
 $n\geq 0$,
 $E_{n+1}\circ F_{n+1}\cong F_{n}\circ S_{f_{n},f_{n+1}}$, i.e.,
 there is a (necessarily unique) bijective map
from $E_{n+1}\circ F_{n+1}$ to
 $F_{n}\circ S_{f_{n},f_{n+1}}$
 preserving the order and intertwining the respective
source and range
maps.
\end{enumerate}
\end{definition}
\medskip
To see how  the ordered premorphism $f:B_1\to B_2$ induces a factoring $\alpha:X_{B_2}\to X_{B_1}$ between the two Vershik systems, 
let $x=(s_{1},s_{2},\ldots)$ be  an infinite path in $X_{B_2}$.
Define the path $\alpha(x)=(e_{1},e_{2},\ldots)$ in $X_{B_1}$ as follows.
Fix $n\geq 1$.
By Definition~\ref{defpreobd}, the  diagram

 \[
\xymatrix{V_{0}\ar[r]^{E_{0,n}}\ar[d]_{F_{0}}
 &V_{n}\ar[d]^{F_{n}} \\
 W_{0}\ar[r]_{S_{0,f_{n}}}
 &W_{f_{n}}
 }
\]
commutes,  that is, $F_{0}\circ S_{0,f_{n}}\cong E_{0,n}\circ F_{n}$. Thus, there is a unique path
$(e_{1},e_{2},\ldots,e_{n},d_{n})$ in $E_{0,n}\circ F_{n}$ (in fact $(e_1, e_2, \ldots, e_n)\in E_{0,n}$ and $d_n\in F_n$), corresponding to
the path $(s_{0},s_{1},\ldots,s_{f_{n}})$ in
$F_{0}\circ S_{0,f_{n}}$
where $s_{0}$ is the unique element of $F_{0}$. So
the path $\alpha(x)=(e_{1},e_{2},\ldots)$ in $X_{B}$
is associated to the path $x=(s_{1},s_{2},\ldots)$  in $X_{B_2}$. 

The 
correspondence of factor maps between  two Vershik systems and ordered premorphisms between the associated properly ordered Bratteli diagrams was established in the proof of the following proposition.

 \begin{proposition}[\cite {AEG}, Proposition 4.6]\label{AEG_prop}
Let $(X,T)$ and $(Y, S)$ be Cantor minimal systems, and let $x\in X$ and $y\in Y$ . Suppose that $B_1$ and $B_2$ are Bratteli-Vershik models for $(Y, S, y)$ and $(X, T, x)$ respectively. The following statements are equivalent:\begin{enumerate}
\item there is a factor map $\alpha: (X,T)\rightarrow (Y,S)$ with $\alpha(x) = y$;
\item there is an (ordered) premorphism $f$ from $B_1$ to $B_2$.
\end{enumerate}
More precisely, there is a one-to-one correspondence between the set of factor maps $\alpha$ as in (1)  and the set of equivalence classes of ordered premorphisms $f$ from $B_1$ to $B_2$.
\end{proposition}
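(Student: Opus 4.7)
The plan is to prove the two implications separately and then observe that the resulting constructions are mutually inverse on equivalence classes. For $(2)\Rightarrow(1)$, I would take as definition the construction of $\alpha\colon X_{B_2}\to X_{B_1}$ already spelled out immediately before the proposition, which sends a path $x=(s_1,s_2,\ldots)$ in $X_{B_2}$ to the path $(e_1,e_2,\ldots)\in X_{B_1}$ whose length-$n$ prefix is obtained by applying the order-preserving bijection $F_0\circ S_{0,f_n}\cong E_{0,n}\circ F_n$ to $(s_1,\ldots,s_{f_n})$ and discarding the $F_n$-tail. Four things need to be checked: (a) the prefixes obtained at successive levels are consistent, which follows from the commutativity of adjacent squares in the premorphism diagram; (b) continuity is immediate since the first $n$ edges of $\alpha(x)$ depend only on the first $f_n$ edges of $x$; (c) $\alpha(x_{\max}^{B_2})=x_{\max}^{B_1}$ because the bijection preserves the maximum, which under Theorem~\ref{HPS} identifies the distinguished base points; (d) $\alpha\circ\varphi_{B_2}=\varphi_{B_1}\circ\alpha$, the crucial point, follows from the ordered part of the commutativity: the Vershik successor on either side is determined by the linear order on finite paths, and an order-preserving bijection maps successors to successors.

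For $(1)\Rightarrow(2)$, the idea is to read off the premorphism from the interaction of the two Kakutani--Rokhlin structures. The diagrams $B_1$ and $B_2$ give refining sequences of KR-partitions $\mathcal{P}_n$ of $Y$ and $\mathcal{Q}_m$ of $X$, whose atoms are indexed by finite paths in the respective diagrams. Using continuity of $\alpha$ and compactness of $X$, choose $0=f_0<f_1<f_2<\cdots$ such that each atom of $\mathcal{Q}_{f_n}$ is mapped by $\alpha$ into a single atom of $\mathcal{P}_n$. Declare the elements of $F_n$ with range $w\in W_{f_n}$ to be in bijection with the paths from $V_0$ to $w$ in $B_2$, and assign such an element the source $v\in V_n$ whose tower in $Y$ contains the $\alpha$-image of the corresponding atom of $\mathcal{Q}_{f_n}$; order $F_n$ by the induced order on paths in $B_2$ ending at $w$.

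The main obstacle is verifying Definition~\ref{defpreobd}(3), that the combinatorial bijection $E_{n+1}\circ F_{n+1}\cong F_n\circ S_{f_n,f_{n+1}}$ is order-preserving. Both sides are naturally identified with paths from $V_0$ to $W_{f_{n+1}}$ passing through $V_{n+1}$ via an $F_{n+1}$-edge (respectively through $V_n$ via an $F_n$-edge); in dynamical terms, each side parametrizes the levels of the towers at the endpoint in $W_{f_{n+1}}$. The equivariance $\alpha\circ T=S\circ\alpha$ ensures that moving one level up in a $B_2$-tower corresponds to moving one step in the combined data of a $B_1$-path and an $F_n$-edge, so the two orderings coincide. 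With this in hand, the bijective correspondence between factor maps (fixing $\alpha(x)=y$) and equivalence classes of premorphisms is formal: call two premorphisms equivalent if they agree after a common telescoping of the sequence $(f_n)$, observe that the construction in $(2)\Rightarrow(1)$ descends to equivalence classes while the construction in $(1)\Rightarrow(2)$ has its only ambiguity in the choice of $(f_n)$, and check directly that the two operations are mutually inverse.
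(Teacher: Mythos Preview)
The paper does not provide its own proof of this proposition: it is quoted from \cite{AEG}, and only the construction of $\alpha$ from a premorphism (your direction $(2)\Rightarrow(1)$) is recalled in the paragraph preceding the statement. There is therefore no in-paper argument against which to compare your proposal.

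That said, your sketch for $(1)\Rightarrow(2)$ contains a real gap. You declare the edges of $F_n$ with range $w\in W_{f_n}$ to be in bijection with the finite paths from the top vertex to $w$ in $B_2$, i.e.\ with the levels of the KR-tower over $w$, each such edge receiving as source the vertex $v\in V_n$ whose $\mathcal P_n$-tower contains the $\alpha$-image of the corresponding level. With this definition the number of $F_n$-edges into $w$ already equals the height of the $B_2$-tower over $w$, so the set of paths in $E_{0,n}\circ F_n$ with range $w$ has cardinality $\sum_{g\in r^{-1}(w)} h(s(g))$, the sum of the heights of the $B_1$-towers over the sources of those edges; this strictly exceeds the height of the $B_2$-tower over $w$ as soon as any relevant $B_1$-tower has height larger than one. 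Hence the required bijection $E_{0,n}\circ F_n\cong F_0\circ S_{0,f_n}$ already fails on cardinalities, and the ordered commutativity of Definition~\ref{defpreobd}(\ref{defpreobd_it2}) cannot hold. The missing idea is that, after enlarging $f_n$ and using $\alpha(x)=y$, one can arrange that the base of $\mathcal Q_{f_n}$ maps into the base of $\mathcal P_n$; equivariance then cuts the tower over $w$ into a concatenation of \emph{complete} $\mathcal P_n$-towers, and one $F_n$-edge should be created for each such block rather than for each level. That block decomposition is exactly what the morphism $\tau_n$ of Definition~\ref{new} records.
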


  \begin{definition}\label{new}
 Let $B_1=(V,\, E,\geq)$ and $B_2=(W,\, S, \,\geq')$ be two ordered Bratteli diagrams with an ordered premorphism $f:B_1\rightarrow B_2$ between them.  One can describe an induced  sequence of morphisms
\[
\tau=(\tau_n:W_{n}\rightarrow V_n^*)_{n\geq 0}
\]
  between the two diagrams as follows.
Let $f=(F, (f_{n})_{n=0}^{\infty},\geq)$.
Assume that $f_{n}=n$ for every $n\geq 0$.
(We can always make this assumption 
using a telescoping of $B_2$ along a 
strictly increasing subsequence of
$(f_{n})_{n=0}^{\infty}$). 
Let $V=\cup_{n=0}^{\infty}V_{n}$ and 
$W=\cup_{n=0}^{\infty}W_{n}$
be the sets of vertices of $B_{1}$
and $B_{2}$, respectively.
Then for every $n\geq 0$, the morphism $\tau_n$ is  a map $\tau_n: W_n\rightarrow V_n^{*}$ that for every vertex $w\in W_n$, $\tau(w)$ represents the ordered set of vertices of $V_{n}$ connected $w$ via $f$.
More precisely, let 
$F=\cup_{n=0}^{\infty}F_{n}$  be the decomposition of the set of edges
of $f$ and let $\{g_{1},g_{2},\ldots,g_{m}\}$
be the ordered set of edges in $F_n$
with range $w$. Then 
\[
\tau_n(w)=s(g_1)s(g_2)\cdots s(g_m).
\]

We extend $\tau_n$ to 
$W_{n}^{*}$ by concatenation.
Observe that the essential property
of the ordered premorphism $f$ which is the ordered commutativity, reads as
\begin{equation}\label{equ_ordcom_mor}
\tau_n\circ \sigma_{n+1}^{B_2}=\sigma_{n+1}^{B_1}\circ \tau_{n+1},
\end{equation}
for all $n\geq 1$.
Conversely, every sequence of morphisms
satisfying \eqref{equ_ordcom_mor}
induces an ordered premorphism between the two diagrams,

however,  this direction is not needed in this
note.

 \end{definition}

\begin{example}

Figure \ref{1} shows an example  of ordered premorphism between two ordered Bratteli diagrams. The left diagram is associated to the Sturmain system with  rotattion number $\theta={{1+\sqrt{5}}\over 2}$ and the right diagram is a substitution which is orbit equivalent to the Sturmian system with rotation number $\theta/2$. 

\begin{figure}
\begin{center}
\begin{tikzpicture}[scale=1.1]
\hspace{-1cm}\path (3,13.5) node[inner sep=0pt](x'10) {} (10,13.5) node[inner sep=0pt](x10) {}

(4,9.5) node[inner sep=0pt](z'11) {}  (2,9.5) node[inner sep=0pt](z'12) {} 
(12,9.5) node[inner sep=0pt](z11) {}  (10,9.5) node[inner sep=0pt](z12) {} 
(8,9.5) node[inner sep=0pt](z13){}

(4,5.5) node[inner sep=0pt](w'11) {} (2,5.5) node[inner sep=0pt](w'12) {}
(12,5.5) node[inner sep=0pt](w11) {} (10,5.5) node[inner sep=0pt](w12) {}
(8,5.5) node[inner sep=0pt](w13) {}

(2,1.5) node[inner sep=0pt](y'12) {} (4,1.5) node[inner sep=0pt](y'11) {}
(10,1.5) node[inner sep=0pt](y12) {} (12,1.5) node[inner sep=0pt](y11) {}
 (8,1.5) node[inner sep=0pt](y13) {}

(12,1) node[inner sep=0pt](p11) {} (12,0.5) node[inner sep=0pt](p12){} (12,0) node[inner sep=0pt](p13){}
(10,1) node[inner sep=0pt](p21) {} (10,0.5) node[inner sep=0pt](p22){} (10,0) node[inner sep=0pt](p23){}
(8,1) node[inner sep=0pt](p31) {} (8,0.5) node[inner sep=0pt](p32){} (8,0) node[inner sep=0pt](p33){}

(4,1) node[inner sep=0pt](p'11) {} (4,0.5) node[inner sep=0pt](p'12){} (4,0) node[inner sep=0pt](p'13) {}
(2,1) node[inner sep=0pt](p'21) {} (2,0.5) node[inner sep=0pt](p'22){} (2,0) node[inner sep=0pt](p'23) {}
;

{\color{red}\filldraw (x10) circle [radius=0.1];
\filldraw (z11) circle [radius=0.1];
\filldraw (z12)  circle[radius=0.1];
\filldraw (z13) circle [radius=0.1];
\filldraw (w11) circle [radius=0.1];
\filldraw (w12) circle [radius=0.1];
\filldraw (w13)  circle[radius=0.1];
\filldraw (y13)  circle[radius=0.1];
\filldraw (y11)  circle[radius=0.1];
\filldraw (y12)  circle[radius=0.1];
\filldraw (p11)  circle[radius=0.03];
\filldraw (p12)  circle[radius=0.03];
\filldraw (p13)  circle[radius=0.03];
\filldraw (p21)  circle[radius=0.03];
\filldraw (p22)  circle[radius=0.03];
\filldraw (p23)  circle[radius=0.03];
\filldraw (p31)  circle[radius=0.03];
\filldraw (p32)  circle[radius=0.03];
\filldraw (p33)  circle[radius=0.03];}
\node at (8.6,9.55) {$x_1$};
\node at (11.2,9.55) {$y_1$};
\node at (13.2,9.55) {$z_1$};
\node at (8.6,5.55) {$x_2$};
\node at (11.2,5.55) {$y_2$};
\node at (13.2,5.55) {$z_2$};
\node at (8.6,1.55) {$x_3$};
\node at (11.2,1.55) {$y_3$};
\node at (13.2,1.55) {$z_3$};

{\color{blue}\filldraw (x'10) circle [radius=0.1];
\filldraw (z'11) circle [radius=0.1];
\filldraw (z'12)  circle[radius=0.1];
\filldraw (w'11) circle [radius=0.1];
\filldraw (w'12) circle [radius=0.1];
\filldraw (y'11)  circle[radius=0.1];
\filldraw (y'12)  circle[radius=0.1];
\filldraw (p'11)  circle[radius=0.03];
\filldraw (p'12)  circle[radius=0.03];
\filldraw (p'13)  circle[radius=0.03];
\filldraw (p'21)  circle[radius=0.03];
\filldraw (p'22)  circle[radius=0.03];
\filldraw (p'23)  circle[radius=0.03];
}
\node at (4.6,9.55) {$v_1$};
\node at (2.6,9.55) {$u_1$};
\node at (4.6,5.55) {$v_2$};
\node at (2.6,5.55) {$u_2$};
\node at (4.6,1.55) {$v_3$};
\node at (2.6,1.55) {$u_3$};

{\color{red}\draw (x10) to[out=350,in=110] (z11);
\draw (x10) to[out=290,in=120] (z11);
\draw (x10) to[out=250,in=120] (z12);
\draw (x10) to[out=280,in=70] (z12);
\draw (x10) to[out=175,in=100] (z13);
\draw (x10) to[out=240,in=60] (z13);

\draw (z13) to[out=200,in=120] (w13);
\draw (z13) to[out=230,in=100] (w13);
\draw (z13) to[out=260,in=80] (w13);
\draw (z12) to[out=225,in=75] (w13);
\draw (z11) to[out=225,in=65] (w13);
\draw (z13) to[out=320,in=140] (w12);
\draw (z13) to[out=350,in=110] (w12);
\draw (z12) to[out=270,in=90] (w12);
\draw (z11) to[out=270,in=90] (w11);
\draw (z13) to[out=320,in=120] (w11);
\draw (z13) to[out=350,in=120] (w11);

\draw (w13) to[out=200,in=120] (y13);
\draw (w13) to[out=230,in=100] (y13);
\draw (w13) to[out=260,in=80] (y13);
\draw (w12) to[out=225,in=75] (y13);
\draw (w11) to[out=225,in=65] (y13);
\draw (w13) to[out=320,in=140] (y12);
\draw (w13) to[out=350,in=110] (y12);
\draw (w12) to[out=270,in=90] (y12);
\draw (w11) to[out=270,in=90] (y11);
\draw (w13) to[out=320,in=120] (y11);
\draw (w13) to[out=350,in=120] (y11);}

{\color{blue}
\draw(x'10) to[out=350,in=90] (z'11);
\draw (x'10) to[out=320,in=100] (z'11);
\draw (x'10) to[out=190,in=120] (z'12);
\draw (x'10) to[out=230,in=80] (z'12);

\draw (z'12) to[out=210,in=140] (w'12);
\draw (z'12) to[out=230,in=110] (w'12);
\draw (z'12) to[out=250,in=80] (w'12);
\draw (z'12) to[out=270,in=70] (w'12);
\draw (z'11) to[out=210,in=45] (w'12);
\draw (z'12) to[out=310,in=100] (w'11);

 \draw (w'12) to[out=210,in=140] (y'12);
\draw (w'12) to[out=230,in=110] (y'12);
\draw (w'12) to[out=250,in=80] (y'12);
\draw (w'12) to[out=270,in=70] (y'12);
\draw (w'11) to[out=210,in=45] (y'12);
\draw (w'12) to[out=310,in=100] (y'11);}

\node at (5.6,15) {$B_1$};
\node at (8.2,15) {$B_2$};
\draw [->](5,15) to (7,15);
\node at (6.9,15.3) {$f$};
\draw (x'10) to[out=30,in=155] (x10);
\draw (z13) to[out=95,in=50] (z'12);
\draw (z13) to[out=110,in=20] (z'12);
\draw (z12) to[out=140,in=20] (z'12);
\draw (z12) to[out=170,in=35] (z'11);
\draw (z11) to[out=120,in=30] (z'11);
\draw (z11) to[out=160,in=10] (z'12);

\draw (w13) to[out=95,in=50] (w'12);
\draw (w13) to[out=110,in=20] (w'12);
\draw (w12) to[out=140,in=20] (w'12);
\draw (w12) to[out=170,in=35] (w'11);
\draw (w11) to[out=120,in=30] (w'11);
\draw (w11) to[out=160,in=10] (w'12);
\draw (y13) to[out=95,in=50] (y'12);
\draw (y13) to[out=110,in=20] (y'12);
\draw (y12) to[out=140,in=20] (y'12);
\draw (y12) to[out=170,in=35] (y'11);
\draw (y11) to[out=120,in=30] (y'11);
\draw (y11) to[out=160,in=10] (y'12);


{\color{red}\draw (z13) ++(1.1,2) node {{\tiny $1\,........$}};
\draw (z13) ++(1.7,2) node {{\tiny $34$}};
\draw (z12) ++(0.6,2.2) node {{\tiny $1......$}};
\draw (z12) ++(1.05,2.2) node {{\tiny$21$}};
\draw (z11) ++(-0.1,2) node {{\tiny $1\, .....$}};
\draw (z11) ++(0.3,2) node {{\tiny $21$}};}
\draw (z12) ++(0.4,0.4) node {{\tiny $2$}};
\draw (z12) ++(0.2,0.2) node {{\tiny $1$}};
\draw (z11) ++(0.4,0.5) node {{\tiny $2$}};
\draw (z11) ++(0.2,0.2) node {{\tiny $1$}};
{\color{red}\draw (w13) ++(0.1,1.5) node {{\tiny $1$}};
\draw (w13) ++(0.45,1.5) node {{\tiny $2$}};
\draw (w13) ++(0.88,1.5) node {{\tiny $4$}};
\draw (w13) ++(1.2,1.5) node {{\tiny $3$}};
\draw (w13) ++(1.7,1.5) node {{\tiny $5$}};
\draw (w12) ++(0.1,1.1) node {{\tiny $1$}};
\draw (w12) ++(0.5,1.3) node {{\tiny $2$}};
\draw (w12) ++(0.81,1.4) node {{\tiny $3$}};
\draw (w11) ++(-0.58,1.8) node {{\tiny $1$}};
\draw (w11) ++(-0.3,1.95) node {{\tiny $2$}};
\draw (w11) ++(-0.8,2.3) node {{\tiny $3$}};}
\draw (w12) ++(0.4,0.4) node {{\tiny $1$}};
\draw (w12) ++(0.2,0.2) node {{\tiny $2$}};
\draw (w11) ++(0.4,0.5) node {{\tiny $1$}};
\draw (w11) ++(0.2,0.2) node {{\tiny $2$}};
{\color{red}\draw (y13) ++(0.1,1.5) node {{\tiny $2$}};
\draw (y13) ++(0.45,1.5) node {{\tiny $4$}};
\draw (y13) ++(0.891,1.5) node {{\tiny $5$}};
\draw (y13) ++(1.2,1.5) node {{\tiny $3$}};
\draw (y13) ++(1.7,1.5) node {{\tiny $1$}};
\draw (y12) ++(0.1,1.1) node {{\tiny $3$}};
\draw (y12) ++(0.5,1.3) node {{\tiny $2$}};
\draw (y12) ++(0.81,1.4) node {{\tiny $1$}};
\draw (y11) ++(-0.58,1.8) node {{\tiny $3$}};
\draw (y11) ++(-0.3,1.95) node {{\tiny $2$}};}
\draw (y11) ++(0.8,2.3) node {{\tiny $1$}};
\draw (y12) ++(0.4,0.4) node {{\tiny $2$}};
\draw (y12) ++(0.2,0.2) node {{\tiny $1$}};
\draw (y11) ++(0.4,0.5) node {{\tiny $2$}};
\draw (y11) ++(0.2,0.2) node {{\tiny $1$}};

{\color{blue}\draw (z'12) ++(0.55,2) node {{\tiny $1.....$}};
\draw (z'12) ++(0.97,2) node {{\tiny $17$}};
\draw (z'11) ++(0.8,2) node {{\tiny $1...$}};
\draw (z'11) ++(1.1,2) node {{\tiny $4$}};
\draw (w'12) ++(-0.15,1.7) node {{\tiny $1$}};
\draw (w'12) ++(0.25,1.7) node {{\tiny $2$}};
\draw (w'12) ++(0.75,1.7) node {{\tiny $3$}};
\draw (w'12) ++(1.05,1.6) node {{\tiny $4$}};
\draw (w'12) ++(1.2,1.5) node {{\tiny $5$}};
\draw (w'11) ++(0.25,1.7) node {{\tiny $1$}};
\draw (y'12) ++(-0.15,1.7) node {{\tiny $5$}};
\draw (y'12) ++(0.25,1.7) node {{\tiny $4$}};
\draw (y'12) ++(0.75,1.7) node {{\tiny $3$}};
\draw (y'12) ++(1.05,1.6) node {{\tiny $2$}};
\draw (y'12) ++(1.6,1.5) node {{\tiny $1$}};
\draw (y'11) ++(0.25,1.7) node {{\tiny $1$}};}
\end{tikzpicture}
\end{center}
\caption{}\label{1}
\end{figure}

Now one can determine the sequence of morphisms induced by $f$ between the two diagrams. To do that we first label the vertices of the two diagrams  by $V_i=\{u_i, v_i\}$ and $W_i=\{x_i, y_i, z_i\}$ for every $i\geq 1$. Then the morphisms are
\begin{eqnarray*}
\tau_i(x_i)=u_iu_i,\ \tau_i(y_i)=v_iu_i,\ \ \tau_i(z_i)=u_iv_i,  \ \ {\rm if}\ i\ {\rm is\ odd},\\
\tau_i(x_i)=u_iu_i,\ \tau_i(y_i)=u_iv_i,\ \ \tau_i(z_i)=v_iu_i,  \ \ {\rm if}\ i\ {\rm is\ even}.\\
\end{eqnarray*} 
Let us examine the order commutativity in terms of the morphisms on a fixed vertex of the right diagram. Consider for example  $y_2\in W_2$. Then
$$\tau_1\circ\sigma_2^{B_2}(y_2)=\tau_1(x_1x_1y_1)=\tau_1(x_1)\tau_1(x_1)\tau_1(y_1)=u_1u_1u_1u_1v_1u_1$$
and
$$\sigma_2^{B_1}\circ\tau_2(y_2)=\sigma_2^{B_1}(u_2v_2)=\sigma_2^{B_1}(u_2)\sigma_2^{B_1}(v_2)=u_1u_1u_1u_1v_1u_1.$$
\end{example}

%
We refer the reader to   \cite{AEG} to see more examples of ordered premorphisms.


\section{ Rank of Factors}
In this section we prove Theorem \ref{main}. To be prepared we firstly mention some observations and facts about {\it microscoping} of ordered Bratteli diagrams.

The following ``packing lemma'' is useful for reducing the rank
of an ordered Bratteli diagram.
Also, it is of interest in its own right.

\begin{lemma}\label{packing}
Let $B=((V_n)_{n\geq 0},\, (E_n)_{n\geq 1}
,\geq )$ be an ordered Bratteli diagram for which there exist some $k\geq 1$ 
and a set of words $W\subseteq V_{k-1}^*$ such that
\begin{equation}\label{equ_packing}
\sigma_{k}^B(v)\in W^*\ \ \text{for every}\
v\in V_{k}.
\end{equation}
Suppose  that $W$ is a minimal subset
of $V_{k-1}^*$
(with respect to the inclusion
relation) satisfying \eqref{equ_packing}.
Then there is an 
ordered Bratteli diagram $B'$ isomorphic to $B$ which is
constructed from $B$ 
by adding $W$ as a set of vertices between   levels
 $V_{k-1}$ and $V_{k}$.
\end{lemma}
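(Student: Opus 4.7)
The plan is to \emph{microscope} $B$ at level $k$, inserting between $V_{k-1}$ and $V_{k}$ an intermediate layer $V'$ that contains exactly one vertex $v_{w}$ for each $w\in W$, and placing edges above and below $V'$ so that the two--step ordered morphism at every $v\in V_{k}$ spells out $\sigma_{k}^{B}(v)$ letter--for--letter. Telescoping the new layer back into $V_{k}$ then recovers $B$ together with its original ordering, which is what ``isomorphic'' is supposed to mean here (i.e., $B'$ and $B$ determine the same equivalence class of ordered Bratteli diagrams, and hence conjugate Bratteli--Vershik systems).

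Concretely I would proceed in three short steps. First, for each $v\in V_{k}$ fix one factorization
$\sigma_{k}^{B}(v)=w_{1}^{(v)}w_{2}^{(v)}\cdots w_{m_{v}}^{(v)}$
with $w_{i}^{(v)}\in W$; such a factorization exists by hypothesis \eqref{equ_packing} though it need not be unique. Second, build $B'$ as follows: between $V_{k-1}$ and $V'$, each $w=a_{1}a_{2}\cdots a_{\ell}\in W$ gets $\ell$ ordered edges into $v_{w}$ whose $j$-th source is $a_{j}\in V_{k-1}$; between $V'$ and $V_{k}$, the vertex $v$ gets $m_{v}$ ordered edges whose $j$-th source is $v_{w_{j}^{(v)}}$. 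All orderings away from level $k$ are left untouched. Third, observe that by construction the concatenation of the two new ordered morphisms at a vertex $v\in V_{k}$ is precisely $w_{1}^{(v)}w_{2}^{(v)}\cdots w_{m_{v}}^{(v)}=\sigma_{k}^{B}(v)$, with letters in the same order as before; therefore telescoping $V'$ back into a single level between $V_{k-1}$ and $V_{k}$ reproduces $B$ together with its ordering.

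The only delicate point, and the place where the minimality hypothesis is genuinely used, is checking that $B'$ is a bona fide Bratteli diagram, i.e., that every new vertex $v_{w}\in V'$ has at least one outgoing edge; equivalently, that each $w\in W$ occurs in the chosen decomposition of at least one $v\in V_{k}$. This is forced by minimality: if some $w\in W$ were absent from every chosen decomposition, then the set $W\setminus\{w\}$ would also satisfy \eqref{equ_packing} for every $v\in V_{k}$, contradicting the minimality of $W$. In fact a slightly stronger statement holds, namely that for each $w\in W$ there is some $v(w)\in V_{k}$ such that \emph{every} $W^{*}$-factorization of $\sigma_{k}^{B}(v(w))$ must use $w$, so the verification becomes automatic regardless of which factorizations are chosen. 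Once this one bookkeeping step is settled, the rest of the construction is mechanical.
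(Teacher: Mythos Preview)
Your proposal is correct and follows essentially the same route as the paper: insert a new level indexed by $W$, define the lower edge set so that $\sigma^{B'}_{k}(v_w)=w$, fix a $W^{*}$-factorization of each $\sigma_{k}^{B}(v)$ to define the upper edge set, invoke minimality of $W$ to guarantee every $v_w$ has an outgoing edge, and conclude by observing that the composed morphism equals $\sigma_{k}^{B}$. Your discussion of the minimality point is in fact more explicit than the paper's, which simply asserts that minimality does the job.
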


\begin{proof}
Let
$W=\{w_1, \dots, w_s\}$ where $w_{i}$'s are distinct.
Define the set of vertices of
$B'=((V'_n)_{n\geq 0},\, (E'_n)_{n\geq 1}
,\geq' )$ by
\[
V'_n=V_n \ \ \text{for}\  \ 0\leq n < k,\ \
V'_k=W,\ \ \ \text{and}\ \ 
V'_n=V_{n-1}\ \  \text{for}\ n>k.
\]
For the set of edges of $B'$,
first we set 
\[
E'_n=E_n \ \ \text{for}\  \ 1\leq n < k,\ \
 \text{and}\ \ 
E'_n=E_{n-1}\ \  \text{for}\ n\geq k+2.
\]
It remains to define $E'_{k}$ and
$E'_{k+1}$. 
Since every $w\in W$ is a word
in $V_{k-1}^*$, we can define 
(uniquely)
a partially ordered set of edges  $E'_{k}$  from 
$V'_{k-1}=V_{k-1}$
to $V'_{k}=W$
such that $\sigma^{B'}_{k}(w)=w$,
for every $w\in W$.
To define $E'_{k+1}$, first note that
for every $v\in V'_{k+1}=V_k$
we have 
\begin{equation}\label{equ_packing_rep}
\sigma^{B}_{k}(v)= w_{i_{1}}w_{i_{2}}\cdots
w_{i_{r}},
\end{equation}
for some 
$w_{i_{1}},w_{i_{2}},\ldots,
w_{i_{r}}$ in $W$ depending on $v$.
(Note that this representation of 
$\sigma^{B}_{k}(v)$ in terms of
the words of $W$ is not necessarily unique
but we fix one representation.)
Then we can define
a partially ordered set of edges  $E'_{k+1}$  from 
$V'_{k}=W$
to $V'_{k+1}=V_k$
such that $\sigma^{B'}_{k+1}(v)=w_{i_{1}}w_{i_{2}}\cdots
w_{i_{r}}$, where 
$w_{i_{1}},w_{i_{2}},\ldots,
w_{i_{r}}$ are considered as vertices
of $V'_{k}$ here.
The minimality of $W$ guarantees that
for very $w\in W$ there is at least
one edge in  $E'_{k+1}$
with source $w$.
The resulting ordered Bratteli
diagram $B'$ is  
isomorphic to $B$ since
$E'_{k}\circ E'_{k+1}$ is order
isomorphic to $E_{k}$. In fact,
if $v\in V_{k}$ and if 
we consider the representation
in \eqref{equ_packing_rep} for
$\sigma^{B}_{k}(v)$, then
\[
\sigma^{B'}_{[k-1,k+1]}(v)=
\sigma^{B'}_{k}(\sigma^{B'}_{k+1}(v))
=\sigma^{B'}_{k}(w_{i_{1}}w_{i_{2}}\cdots
w_{i_{r}})=w_{i_{1}}w_{i_{2}}\cdots
w_{i_{r}}=\sigma^{B}_{k}(v).
\]
It follows that 
 $B'$ is  
isomorphic to $B$.
\end{proof}

\medskip
\begin{lemma}\label{basis}
Let $A$ be a finite alphabet and let $p\in \mathbb{N}$. 
Let $s_1, \dots, s_p, t_1, \dots, t_p$, and $w$ be words in $A^*$ such that 
\[
w=s_1t_1=s_2t_2=\cdots=s_pt_p.
\]
Suppose that there are two words $s$ and $t$ in $A^*$ with
$|s|,|t|\geq |w|$ such that for any  $1\leq i\leq p$, $s_i$ is a suffix of $s$ and $t_i$ is a prefix of $t$. 
Then there exists a set of words $B\subseteq A^*$ such that
\begin{enumerate}
\item\label{basis_it1}
$\mathrm{card}(B)\leq 3$;
\item\label{basis_it2}
$s_i, t_i\in B^*$
 for every 
$1\leq i\leq p$. 
\end{enumerate}
\end{lemma}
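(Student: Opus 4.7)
The plan is to first show that $w$ itself is highly periodic, then read off a generating set $B$ of size at most $3$ from the period structure of $w$. After removing duplicates and reindexing, I assume that $\ell_1 < \ell_2 < \cdots < \ell_p$, where $\ell_i = |s_i|$, and I write $L = |w|$ and $\delta_i = \ell_{i+1} - \ell_i$.

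The first step is to prove that each $\delta_i$ is a period of $w$. Since $s_i$ and $s_{i+1}$ are both suffixes of $s$ and $|s| \geq L \geq \ell_{i+1}$, the shorter word $s_i$ is a suffix of $s_{i+1}$; and since both are prefixes of $w$, $s_i$ is also a prefix of $s_{i+1}$. Hence $s_{i+1}$ has period $\delta_i$, which yields $w[k] = w[k + \delta_i]$ for $k \in [1, \ell_i]$. A symmetric argument, using prefixes of $t$ and suffixes of $w$, shows that $t_{i+1}$ is both a prefix and a suffix of $t_i$, so $t_i$ has period $\delta_i$; this translates to $w[k] = w[k + \delta_i]$ for $k \in [\ell_i + 1, L - \delta_i]$. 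The two ranges cover $[1, L - \delta_i]$, so $w$ has period $\delta_i$. Since $\sum_i \delta_i = \ell_p - \ell_1 \leq L$, the hypothesis of Lemma~\ref{lem_FW} is trivially satisfied, and I conclude that $w$ has period $d := \gcd(\delta_1, \ldots, \delta_{p-1})$.

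The final step is to extract $B$ from this $d$-periodic structure. Write $w = u^k v$, where $u$ is the prefix of $w$ of length $d$ and $v$ is a prefix of $u$ of length $\rho := L \bmod d$. Because $d$ divides each $\ell_{i+1} - \ell_i$, all the $\ell_i$ share a common residue $r \in \{0, 1, \ldots, d-1\}$ modulo $d$. Writing $u = xu'$ with $x = u[1\ldots r]$ and $u' = u[r+1\ldots d]$, each $s_i$ is the prefix of $w$ of length $a_i d + r$, and hence $s_i = (xu')^{a_i} x$ with $a_i = (\ell_i - r)/d$. For the suffixes, a case analysis based on whether $\ell_i + 1 \leq kd$ or $\ell_i + 1 > kd$ shows that either $t_i = u' \cdot u^{k-a_i-1} \cdot v$ (the generic case) or $t_i = u[r+1\ldots \rho]$, a prefix of $u'$ (the boundary case, which can occur only when $\rho \geq r$ and $a_i = k$). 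I then take $B = \{x, u', z\}$, where $z = u[r+1\ldots \rho]$ when $\rho \geq r$ (so that $v = xz$), and $z = v$ otherwise. A direct computation shows that $|B| \leq 3$ and that $s_i, t_i \in B^*$ for every~$i$.

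The main subtlety lies in this last step: the third generator $z$ must be chosen so as to simultaneously cover the tail block $v$ of $w$ and any short $t_i$ that lands inside $v$, and one has to verify that all the various edge cases ($x$, $u'$, or $z$ empty; $s_p = w$; etc.) still give $|B| \leq 3$. Once the period $d$ of $w$ has been isolated, however, this reduces to careful but mechanical bookkeeping.
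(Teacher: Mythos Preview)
Your argument is correct and follows the same three-step strategy as the paper: establish that $w$ has each $\delta_i$ as a period, apply the Fine--Wilf lemma to obtain the gcd period $d$, and then read off a generating set of size at most $3$ from the $d$-periodic decomposition $w=u^kv$ together with the common residue of the $|s_i|$ modulo $d$. The only differences are cosmetic --- you derive the period of $w$ by observing that $s_i$ is both a prefix and a suffix of $s_{i+1}$ (and dually for the $t_i$), whereas the paper uses the identity $wy=xw$, and your two-case extraction of $B$ (based on whether $\rho\geq r$) is organized a bit more uniformly than the paper's three cases; you should, however, dispose of the degenerate case $p=1$ (after removing duplicates) before invoking the gcd, as the paper does.
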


\begin{proof}
We may assume that each $s_i$ and each $t_i$ is nonempty.
Also, we may assume that $s_i \neq s_j$ for $i\neq j$.
If $p=1$ we take $B=\{s_{1},t_{1}\}$. Hence in the sequel we 
assume that $p\geq 2$. Note that for every $1\leq i, j\leq p$ since both $s_i$ and $s_j$ are suffixes of $s$, either $s_i$ is a suffix of $s_j$ or $s_j$ is a suffix of $s_i$.
Suppose that $s_i$'s are sorted by their lengths:
\begin{equation}\label{equ_sort}
|s_{1}|>|s_{2}|>\cdots >|s_{p}|.
\end{equation}
This implies that $|t_{1}|<|t_{2}|<\cdots <|t_{p}|$ (since $|s_i|+|t_i|=|w|$).
\begin{figure}
\begin{center}
\begin{tikzpicture}[scale=1.5]
\draw  (0,0) --(8.5,0);
\draw  (0,2) --(8.5,2);

\draw [thin] (4,-0.3) --(4,2.5);
\draw [dashed] (0.7,-0.15) --(0.7,2.15);
\draw [dashed] (2.2,-0.15) --(2.2,2.15);

\draw[thick] (0,2)  to [out=50,in=130] node[above]{$s$} (4,2);
\draw[thick] (4,2)  to [out=50,in=130] node[above]{$t$} (8.5,2);

\draw[thick] (0.7,2)  to [out=30,in=150] node[above]{$s_1$} (4,2);
\draw[thick] (4,2)  to [out=30,in=150] node[above]{$t_1$} (6,2);

\draw[thick] (0,0)  to [out=50,in=130] node[above]{$s$} (4,0);
\draw[thick] (4,0)  to [out=50,in=130] node[above]{$t$} (8.5,0);

\draw[thick] (2.2,0)  to [out=30,in=150] node[above]{$s_2$} (4,0);
\draw[thick] (4,0)  to [out=30,in=150] node[above]{$t_2$} (7.5,0);

\draw[thick] (0.7,2)  to [out=-30,in=-150] node[below]{$x$} (2.2,2);
\draw[thick] (2.2,2)  to [out=-30,in=-150] node[below]{$x$} (3.7,2);
\draw[thick] (3.7,2)  to [out=-30,in=-150] node[below]{$x$} (5.2,2);

\draw[thick] (5.2,2)  to [out=-30,in=-150] node[below]{$x'$} (6,2);

\draw[ultra thick] (0.7,2)  to (6,2);
\draw[ultra thick] (0.7,2.1)  to (0.7,1.9);
\draw[ultra thick] (6,2.1)  to (6,1.9);
\draw[->] (6,2.03) to (6.3,2.2);
\node at (6.4,2.3) {$w$};

\draw[ultra thick] (2.2,0)  to (7.5,0);
\draw[ultra thick] (2.2,0.1)  to (2.2,-0.1);
\draw[ultra thick] (7.5,0.1)  to (7.5,-0.1);
\draw[->] (7.5,0.03) to (7.8,0.2);
\node at (7.9,0.3) {$w$};

\draw[thick] (2.2,0)  to [out=-30,in=-150] node[below]{$x$} (3.7,0);
\draw[thick] (3.7,0)  to [out=-30,in=-150] node[below]{$x$} (5.2,0);
\draw[thick] (5.2,0)  to [out=-30,in=-150] node[below]{$x$} (6.7,0);
 
\draw[thick] (6.7,0)  to [out=-30,in=-150] node[below]{$x'$} (7.5,0);
\end{tikzpicture}
\end{center}
\caption{The two thick line segments represent the periodic word $w$.}\label{figure_st1}
\end{figure}
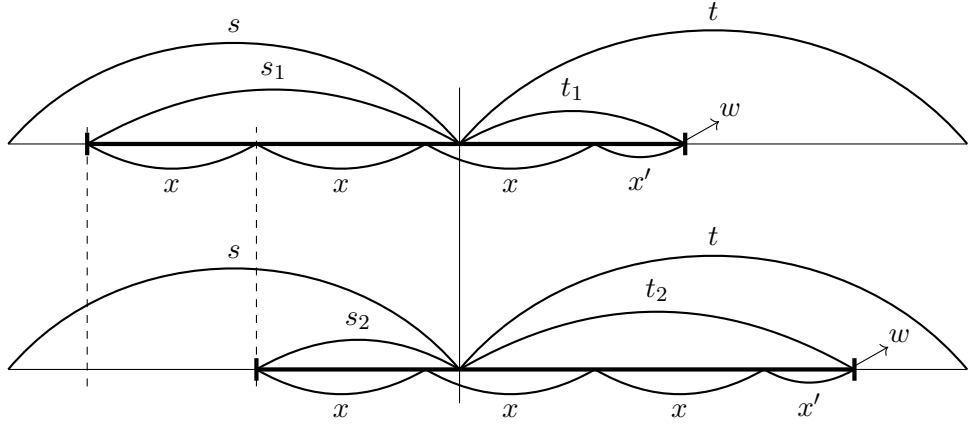

We claim that $w$ is periodic with period $|s_1|-|s_2|$.
To prove the claim, first note that both $s_{1}$ and $s_{2}$ are suffixes of  
$s$ and $|s_{1}|>|s_{2}|$. Hence there is a nonempty
word $x$ such that $s_1=xs_2$. 
See Figure~\ref{figure_st1}.

Similarly, since $t_1$ and $t_2$ are prefixes of $t$ and $|t_1|<|t_2|$,
there is a nonempty word $y$ such that $t_2=t_1 y$.
Note that $|x|<|s_1|<|w|$ and $|y|=|t_2|-|t_1|=|s_1|-|s_2|=|x|$.
We have
\begin{equation}\label{equ_wvuw}
wy=s_{1}t_{1}y=s_{1}t_{2}=xs_{2}t_{2}=xw.
\end{equation}
Let $|w|=k|x|+r$ where 
$k\in \mathbb{N}$ and $0\leq r<|x|$. It follows from \eqref{equ_wvuw} that
$wy^{k}=x^{k}w$. Since $|y^{k}|=|x^{k}|=k|x|\leq |w|$, we see that
$y^{k}$ is a suffix of $w$. So there is a word $x'$ such that 
$w=x'y^{k}$. Then $wy^{k}=x^{k}w=x^{k}x'y^{k}$ and so
$w=x^{k}x'$. Note that $|x'|=|w|-k|x|=r<|x|$. Moreover,
$x'y^{k}=w=x^{k}x'$ implies that $x'$ is a prefix of $x$.
Therefore, $w$ is periodic with period $|x|$. The claim is proved.

Using $w=s_i t_i=s_{i+1}t_{i+1}$, 
a similar argument shows that
$w$ is periodic with period
$|s_{i}|-|s_{i+1}|$, for all
$1\leq i <p$. We have
\[
\sum_{i=1}^{p-1}(|s_{i}|-|s_{i+1}|)=|s_1|-|s_p|<|w|.
\]
Applying Lemma~\ref{lem_FW}, it follows that $w$ is periodic with period 
\[
h=\gcd\big(|s_{1}|-|s_{2}|,\ldots,|s_{p-1}|-|s_{p}|\big).
\]
Thus we can write $w=u^{k}u'$ for some $u,u'\in A^{*}$ and $k\in \mathbb{N}$
where $|u'|<|u|=h$ and $u'$ is a prefix of $u$.
See Figure~\ref{figure_st2}.

Since each $s_{i}$ is a prefix of $w$, we can write $s_{i}=u^{k_{i}}u_{i}$
where $k_{i}\geq 0$, $|u_{i}|<|u|$, and $u_{i}$ is a prefix of $u$. 
We claim that $u_{1}=u_{2}=\cdots=u_{p}$. To see this, let $2\leq i\leq p$.
Since $s_{i}$ is a suffix of $s_1$ and $h$ divides $|s_1|-|s_i|$, we can
write $s_1=u^{\ell}s_{i}$ for some $\ell\geq 1$. Hence,
\[
u^{k_{1}}u_{1}=s_{1}=u^{\ell}s_{i}= u^{\ell}u^{k_{i}}u_{i}=
u^{\ell+k_{i}}u_{i}.
\]
Since $|u_{1}|,|u_{i}|<|u|$, this implies that $u_{1}=u_{i}$, and the claim 
is proved. Since $u_{1}$ is a prefix of $u$, there is a word
$u_{1}'$ such that $u=u_{1}u_{1}'$. See Figure~\ref{figure_st2}.

\begin{figure}
\begin{center}
\begin{tikzpicture}[scale=1.5]
\draw  (0,0) --(8.5,0);
\draw  (0,2) --(8.5,2);
\draw  (0,-4) --(8.5,-4);

\draw [thin] (4,-4.3) --(4,2.5);

\draw[ultra thick] (0.7,2)  to (6,2);
\draw[ultra thick] (0.7,2.1)  to (0.7,1.9);
\draw[ultra thick] (6,2.1)  to (6,1.9);
\draw[->] (6,2.03) to (6.3,2.2);
\node at (6.4,2.3) {$w$};

\draw[ultra thick] (2.2,0)  to (7.5,0);
\draw[ultra thick] (2.2,0.1)  to (2.2,-0.1);
\draw[ultra thick] (7.5,0.1)  to (7.5,-0.1);
\draw[->] (7.5,0.03) to (7.8,0.2);
\node at (7.9,0.3) {$w$};

\draw[ultra thick] (2.7,-4)  to (8,-4);
\draw[ultra thick] (2.7,-3.9)  to (2.7,-4.1);
\draw[ultra thick] (8,-3.9)  to (8,-4.1);
\draw[->] (2.7,-3.97) to (2.4,-3.8);
\node at (2.3,-3.7) {$w$};

\draw[thick] (0,2)  to [out=50,in=130] node[above]{$s$} (4,2);
\draw[thick] (4,2)  to [out=50,in=130] node[above]{$t$} (8.5,2);

\draw[thick] (0.7,2)  to [out=30,in=150] node[above]{$s_1$} (4,2);
\draw[thick] (4,2)  to [out=30,in=150] node[above]{$t_1$} (6,2);

\draw[thick] (0,0)  to [out=50,in=130] node[above]{$s$} (4,0);
\draw[thick] (4,0)  to [out=50,in=130] node[above]{$t$} (8.5,0);

\draw[thick] (2.2,0)  to [out=30,in=150] node[above]{$s_2$} (4,0);
\draw[thick] (4,0)  to [out=30,in=150] node[above]{$t_2$} (7.5,0);

\filldraw (2.2,-1.5) circle [radius=0.01];
\filldraw (2.2,-1.7) circle [radius=0.01];
\filldraw (2.2,-1.9) circle [radius=0.01];
\filldraw (6.2,-1.5) circle [radius=0.01];
\filldraw (6.2,-1.7) circle [radius=0.01];
\filldraw (6.2,-1.9) circle [radius=0.01];

\draw[thick] (0,-4)  to [out=50,in=130] node[above]{$s$} (4,-4);
\draw[thick] (4,-4)  to [out=50,in=130] node[above]{$t$} (8.5,-4);

\draw[thick] (2.7,-4)  to [out=30,in=150] node[above]{$s_p$} (4,-4);
\draw[thick] (4,-4)  to [out=30,in=150] node[above]{$t_p$} (8,-4);

\draw[thick] (0.7,2)  to [out=-30,in=-150] node[below]{$u$} (1.2,2);
\draw[thick] (1.2,2)  to [out=-30,in=-150] node[below]{$u$} (1.7,2);
\draw[thick] (1.7,2)  to [out=-30,in=-150] node[below]{$u$} (2.2,2);
\draw[thick] (2.2,2)  to [out=-30,in=-150] node[below]{$u$} (2.7,2);
\draw[thick] (2.7,2)  to [out=-30,in=-150] node[below]{$u$} (3.2,2);
\draw[thick] (3.2,2)  to [out=-30,in=-150] node[below]{$u$} (3.7,2);
\draw[thick] (3.7,2)  to [out=-30,in=-150] node[below]{$u$} (4.2,2);
\draw[thick] (4.2,2)  to [out=-30,in=-150] node[below]{$u$} (4.7,2);
\draw[thick] (4.7,2)  to [out=-30,in=-150] node[below]{$u$} (5.2,2);
\draw[thick] (5.2,2)  to [out=-30,in=-150] node[below]{$u$} (5.7,2);

\draw[thick] (5.7,2)  to [out=-40,in=-140] node[below]{\small{$u'$}} (6,2);

\draw[thick] (2.2,0)  to [out=-30,in=-150] node[below]{$u$} (2.7,0);
\draw[thick] (2.7,0)  to [out=-30,in=-150] node[below]{$u$} (3.2,0);
\draw[thick] (3.2,0)  to [out=-30,in=-150] node[below]{$u$} (3.7,0);
\draw[thick] (3.7,0)  to [out=-30,in=-150] node[below]{$u$} (4.2,0);
\draw[thick] (4.2,0)  to [out=-30,in=-150] node[below]{$u$} (4.7,0);
\draw[thick] (4.7,0)  to [out=-30,in=-150] node[below]{$u$} (5.2,0);
\draw[thick] (5.2,0)  to [out=-30,in=-150] node[below]{$u$} (5.7,0);
\draw[thick] (5.7,0)  to [out=-30,in=-150] node[below]{$u$} (6.2,0);
\draw[thick] (6.2,0)  to [out=-30,in=-150] node[below]{$u$} (6.7,0);
\draw[thick] (6.7,0)  to [out=-30,in=-150] node[below]{$u$} (7.2,0);

\draw[thick] (7.2,0)  to [out=-40,in=-140] node[below]{\small{$u'$}} (7.5,0);

\draw[thick] (2.7,-4)  to [out=-30,in=-150] node[below]{$u$} (3.2,-4);
\draw[thick] (3.2,-4)  to [out=-30,in=-150] node[below]{$u$} (3.7,-4);
\draw[thick] (3.7,-4)  to [out=-30,in=-150] node[below]{$u$} (4.2,-4);
\draw[thick] (4.2,-4)  to [out=-30,in=-150] node[below]{$u$} (4.7,-4);
\draw[thick] (4.7,-4)  to [out=-30,in=-150] node[below]{$u$} (5.2,-4);
\draw[thick] (5.2,-4)  to [out=-30,in=-150] node[below]{$u$} (5.7,-4);
\draw[thick] (5.7,-4)  to [out=-30,in=-150] node[below]{$u$} (6.2,-4);
\draw[thick] (6.2,-4)  to [out=-30,in=-150] node[below]{$u$} (6.7,-4);
\draw[thick] (6.7,-4)  to [out=-30,in=-150] node[below]{$u$} (7.2,-4);
\draw[thick] (7.2,-4)  to [out=-30,in=-150] node[below]{$u$} (7.7,-4);

\draw[thick] (7.7,-4)  to [out=-40,in=-140] node[below]{\small{$u'$}} (8,-4);

\draw[thick, fill] (3.7,2)  to [out=80,in=100] node[above]{\small{$u_1$}} (4,2);
\draw[thick, fill] (4,2)  to [out=80,in=100] node[above]{\small{$u'_1$}} (4.2,2);
\end{tikzpicture}
\end{center}
\caption{}\label{figure_st2}
\end{figure}

We consider the following three cases:

\bigskip

\textbf{Case~I:} $u_{1}$ is the empty word. Then
$s_{i}=u^{k_{i}}$, and hence $t_{i}=u^{\ell_{i}}u'$ for some $\ell_{i}\geq 0$.
We set $B=\{u,u'\}\setminus \{\emptyset\}$.
(Note that $u'$ may be the empty word.)
Then $B$ generates all $s_{i}$'s and $t_{i}$'s.

\medskip

\textbf{Case~II:} $u_{1}$ is nonempty and $|u_{1}'|>|t_{1}|$.
This is equivalent to $|u_1t_1|<|u|$,
since $u=u_{1}u_{1}'$.
 Then $u'=u_1t_1$ and for every $2\leq i\leq p$, 
since $t_1$ is prefix of $t_i$ there exists some $\ell_i\geq 0$ such that 
$t_i=u'_1u^{\ell_i}u'$. All together imply that $B=\{u_1, u'_1, t_1\}$ 
generates all $s_{i}$'s and $t_{i}$'s.

\medskip

\textbf{Case~III:} $u_{1}$ is nonempty and $|u_{1}'|\leq |t_{1}|$.
This means that $|u_1 t_1|>|u|$. 
It follows that $t_1=u'_1 u^{\ell_1}u'$ for some $\ell_1\geq 0$, and  for every 
$2\leq i\leq p$, there exists $\ell_i\geq 1$ such that $t_i=u'_1u^{\ell_i}u'$. 
Hence, $B=\{u_1, u'_1, u'\}\setminus\{\emptyset\}$
generates all $s_{i}$'s and $t_{i}$'s. 

\bigskip

Therefore, in each case we obtained a set of words $B$ satisfying
Conditions~\eqref{basis_it1} and \eqref{basis_it2}.
This finishes the proof.
\end{proof}

\begin{remark}
In the preceding lemma,
the upper bound 3 for the cardinal of the set $B$ is sharp.
For example, let $A=\{x,y,z,w\}$, 
$w=xyzwxyzwxyzwx$, and
\[
s_1=xyzwxyzwxy,\ t_1=zwx,\ s_2=xyzwxy,\ t_2=zwxyzwx, 
\]
\[ s_3=xy, \ 
t_3=zwxyzwxyzwx,\ 
s=s_1,\ t=t_3.
\]
Then the argument in the proof of the preceding lemma
gives the set of words $B=\{u_{1},u_{1}', u'\}=\{xy, zw, x\}$  
 generating all $s_i$'s and
$t_i$'s. However, it is not hard to see that there is
no generating set $B'$ with $\mathrm{card}(B')<3$.
\end{remark}

\begin{remark}\label{rmk_proper}
Let $B$   be a
properly  ordered Bratteli diagram. Then there exits
a telescoping of $B$, say
$B'=((V_k)_{k\geq 0},\, (E_k)_{k\geq 1},\geq)$, 
such that for each $k\geq 0$ there are 
(necessarily unique) vertices $v_{\min}^{k}$ and  $v_{\max}^{k}$
in $V_{k}$ such that for every $v\in V_{k+1}$,
$\sigma_{k+1}^{B'}(v)$ starts with $v_{\min}^{k}$ and  
ends with $v_{\max}^{k}$, that is, the min edge in $E_{k+1}$
to $v$ comes from $v_{\min}^{k}$ and the max edge to $v$
comes from $v_{\max}^{k}$. This simple fact follows 
easily from an argument using the K\"{o}nig's lemma similar to
the argument showing that every ordered Bratteli diagram
has at least one min infinite path.
\end{remark}

\begin{proposition}\label{engin}
Let $f\colon B_{1}\rightarrow B_2$ be an ordered  premorphism 
between two  properly ordered Bratteli diagrams such that  $B_1$ is simple. 
Consider the the Vershik system on $B_{1}$. Then
\[
{\rm rank}_{\rm top}(X_{B_1},\, T_{B_1})\leq 3\,  {\rm rank}(B_2).
\]
\end{proposition}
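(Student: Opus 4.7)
The plan is to construct a Bratteli-Vershik representation of $(X_{B_1},T_{B_1})$ in which every level has at most $3r$ vertices, where $r:={\rm rank}(B_{2})$. I will do this by finding, for each level $n$ of $B_1$, a set $B^{(n)}\subseteq V_n^{\ast}$ of cardinality at most $3r$ such that $\sigma_{n+1}^{B_1}(v)\in(B^{(n)})^{\ast}$ for every $v\in V_{n+1}$, and then using the packing lemma (Lemma~\ref{packing}) to insert $B^{(n)}$ as a new vertex level between $V_n$ and $V_{n+1}$; telescoping along these new levels will yield a Bratteli-Vershik representation of rank at most $3r$.

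First, I pass from the premorphism $f\colon B_1\to B_2$ to the induced sequence of morphisms $\tau_n\colon W_n\to V_n^{\ast}$ of Definition~\ref{new}, which satisfies $\tau_n\circ\sigma_{n+1}^{B_2}=\sigma_{n+1}^{B_1}\circ\tau_{n+1}$. Telescoping $B_2$ along the subsequence $(f_n)_{n\geq 0}$ of the premorphism lets me assume $f_n=n$ and $|W_n|\leq r$ for every $n$. Via Remark~\ref{rmk_proper} and further simultaneous telescoping, I ensure that both diagrams are in ``proper'' form (fixed letters $v_{\min}^{n},v_{\max}^{n}$) and that, at each level $n$, every $\sigma$-block $\sigma_{n+1}^{B_1}(v)$ is much longer than every $\tau$-block $\tau_n(x)$. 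Consequently, in the common word $\sigma_{n+1}^{B_1}(\tau_{n+1}(w))=\tau_n(\sigma_{n+1}^{B_2}(w))$, each $\sigma$-block spans many full $\tau$-blocks with partial $\tau$-blocks at its ends; and by simplicity of $B_1$, every $\tau_n(x)$ admits at least one interior $\sigma$-cut across all its occurrences.

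To build $B^{(n)}$, fix $x\in W_n$ and collect every decomposition $\tau_n(x)=s_{\alpha}t_{\alpha}$ arising from an interior $\sigma$-cut of some occurrence of $\tau_n(x)$. In such a decomposition $s_{\alpha}$ is a prefix of $\tau_n(x)$ that coincides with the suffix of the $\sigma$-block straddling the left boundary of $\tau_n(x)$, and $t_{\alpha}$ is a suffix of $\tau_n(x)$ coinciding with the prefix of the $\sigma$-block straddling the right boundary. Because these flanking $\sigma$-blocks have length exceeding $|\tau_n(x)|$, Lemma~\ref{basis} applies and produces a set $B_x\subseteq V_n^{\ast}$ with $|B_x|\leq 3$ that generates all the $s_{\alpha}$ and $t_{\alpha}$. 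Taking $B^{(n)}=\bigcup_{x\in W_n}B_x$ gives $|B^{(n)}|\leq 3r$. For $v\in V_{n+1}$, decomposing $\sigma_{n+1}^{B_1}(v)=s\cdot\tau_n(x_1)\cdots\tau_n(x_{\ell})\cdot t$ with $s$ a suffix of some $\tau_n(x_0)$ and $t$ a prefix of some $\tau_n(x_{\ell+1})$ shows $\sigma_{n+1}^{B_1}(v)\in(B^{(n)})^{\ast}$: the end pieces lie in $B_{x_0}^{\ast}\cup B_{x_{\ell+1}}^{\ast}$ by construction of the $B_x$'s, and each full middle block equals $s_1 t_1\in B_{x_j}^{\ast}$ for some decomposition coming from an interior cut.

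The main obstacle is setting up Lemma~\ref{basis} correctly: that lemma requires a single external word $s$ of length $\geq|\tau_n(x)|$ with every $s_{\alpha}$ as a suffix, yet the candidate $s$'s (the left-flanking $\sigma$-blocks $\sigma_{n+1}^{B_1}(v^{\alpha}_{\mathrm L})$) vary with $\alpha$. The saving point is that each $s_{\alpha}$ is simultaneously a prefix of the single word $\tau_n(x)$, which forces strong compatibility among the different $s_{\alpha}$'s: distinct-length pairs induce periods of $\tau_n(x)$. Aggregating all these periods via Fine--Wilf (Lemma~\ref{lem_FW})---the mechanism already inside the proof of Lemma~\ref{basis}---collapses the data into a single basis $B_x$ of size at most $3$, independent of the number of distinct flanking $\sigma$-blocks. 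This is the technical heart of the argument and is exactly why the sharp bound $3$ (rather than a larger constant) in Lemma~\ref{basis} matters.
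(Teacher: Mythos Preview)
Your overall architecture is right and matches the paper's: pass to the induced morphisms $(\tau_n)$, arrange proper form, and use Lemma~\ref{packing} together with Lemma~\ref{basis} to insert small vertex sets. The genuine gap is in your last paragraph, where you try to circumvent the common-suffix hypothesis of Lemma~\ref{basis}. That hypothesis is not a technicality you can drop: the engine of Lemma~\ref{basis} is the identity $wy=xw$, obtained by writing $s_1=xs_2$, which is possible only because $s_2$ is a \emph{suffix} of $s_1$, and that in turn is forced by $s_1,s_2$ being suffixes of a common external word $s$. Your proposed substitute---that each $s_\alpha$ is a prefix of the single word $\tau_n(x)$---is automatic (any cut of $w=\tau_n(x)$ produces a prefix and the complementary suffix) and imposes no constraint whatsoever. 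For instance, with $w=abcd$, $s_1=ab$, $s_2=a$, no period of $w$ is forced. So ``distinct-length pairs induce periods of $\tau_n(x)$'' is false as stated, and the collapse to a basis of size~$3$ does not follow.

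The paper resolves exactly this obstacle, not by weakening Lemma~\ref{basis}, but by strengthening the setup so that a common $s$ and $t$ genuinely exist. Instead of working with $\sigma_{n+1}^{B_1}$, one works with $\sigma_{[n,\ell]}^{B_1}$ for a level $\ell>n+1$ chosen so that the words $\sigma_{[n,\ell-1]}^{B_1}(v_{\min}^{\ell-1})$ and $\sigma_{[n,\ell-1]}^{B_1}(v_{\max}^{\ell-1})$ are longer than every $\tau_n(w)$. By the proper form at level $\ell$, \emph{every} $\sigma_{[n,\ell]}^{B_1}(v)$ begins with $t:=\sigma_{[n,\ell-1]}^{B_1}(v_{\min}^{\ell-1})$ and ends with $s:=\sigma_{[n,\ell-1]}^{B_1}(v_{\max}^{\ell-1})$; hence every partial piece $s_i$ (of length less than $\max_w|\tau_n(w)|\leq|s|$) is a suffix of the \emph{same} $s$, and every $t_i$ a prefix of the \emph{same} $t$. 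Now Lemma~\ref{basis} applies verbatim. One then fixes a single $w_0\in W_\ell$, reads $\tau_\ell(w_0)=v_1\cdots v_m$ (which, after arranging full connectivity in $F$, lists all of $V_\ell$), and cuts $\sigma_{[n,\ell]}^{B_1}(v_1)\cdots\sigma_{[n,\ell]}^{B_1}(v_m)=\tau_n(\sigma_{[n,\ell]}^{B_2}(w_0))$ against the $\tau_n$-blocks; this also handles the first and last blocks $v_1,v_m$, which your sketch leaves ambiguous. With this adjustment your plan goes through.
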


\begin{proof}
Let $B_1=(V,E,\geq)$, $B_{2}=(W,S,\geq)$, and 
$f=(F, (f_{n})_{n=0}^{\infty},\geq)$. 
Let $V=\cup_{n=0}^{\infty}V_{n}$ and 
$W=\cup_{n=0}^{\infty}W_{n}$
be the canonical decompositions of $V$ and $W$, respectively.
Also, let the morphisms $\tau_n$'s associated to the ordered premorphism
$f$ be as in Definition~\ref{new}.

If ${\rm rank}_{\rm top}(X_{B_1},\, T_{B_1})=1$ then there is nothing
to prove. Thus we suppose that 
${\rm rank}_{\rm top}(X_{B_1},\, T_{B_1})\geq 2$. In particular,
$B_1$ has infinitely many levels each of which having at least
two vertices.

By making telescopings of the two diagrams along appropriate 
subsequences if necessary, we can (and do) make these assumptions:
\begin{enumerate}
\item\label{engin_it1}
For every $n\in \mathbb{N}$, every $v\in V_{n}$,
and every $v'\in V_{n+1}$, there is an edge
in $E_{n+1}$ with source $v$ and range $v'$
(since $B_1$ is simple).

\item\label{engin_it2}
For every $n\geq 0$ there are 
vertices $v_{\min}^{n}$ and  $v_{\max}^{n}$
in $V_{n}$ such that for every $v\in V_{n+1}$,
$\sigma_{n+1}^{B_1}(v)$ starts with $v_{\min}^{n}$ and  
ends up with $v_{\max}^{n}$
(by Remark~\ref{rmk_proper}).

\item\label{engin_it3}
$\mathrm{card}(V_{n})\geq 2$, for all $n\in \mathbb{N}$
(by the preceding paragraph).

\item\label{engin_it4}
For every $n\geq 0$, $f_n=n$, and
for every $v\in V_{n}$ and every $w\in W_{n}$ there is an edge
in $F$ 
with source $v$ and range $w$.
(This follows from \eqref{engin_it1} and Definition \ref{new}),
and  an appropriate telescoping of $B_2$.)
\setcounter{TmpEnumi}{\value{enumi}}
\end{enumerate}


The following claim contains the main
part of the proof. 
\smallskip

{\bf Claim.} For every $n\geq 1$ there exist
 some $\ell>n$ and a set of words
  $C_n\subseteq V_n^*$ such that 
\begin{enumerate} 
\setcounter{enumi}{\value{TmpEnumi}}
\item\label{engin_it5} 
$\sigma_{[n,\ell]}^{B_1}(v)\in C_n^*$, for all $v\in V_\ell$;
\item\label{engin_it6} 
${\rm card}(C_n)\leq 3\, {\rm card}(W_n)$.
\setcounter{TmpEnumi}{\value{enumi}}
\end{enumerate} 
To prove the claim,
fix $n\geq 1$.
First, using \eqref{engin_it1} and
\eqref{engin_it3},  there is $\ell>n+1$ 
such that
\begin{enumerate}
\setcounter{enumi}{\value{TmpEnumi}}
\item\label{engin_it7}
 $\big|\sigma_{[n,\ell-1]}^{B_1}(v_{\min}^{\ell-1})\big|,\,
 \big|\sigma_{[n,\ell-1]}^{B_1}(v_{\max}^{\ell-1})\big|
 > \max\{|\tau_n(w)| \colon \ w\in W_n\}$.
 \setcounter{TmpEnumi}{\value{enumi}}
\end{enumerate}

 Fix an arbitrary vertex
 $w_0\in W_{\ell}$. Suppose that  
 $\tau_{\ell}(w_0)=v_1 v_2\cdots v_m$,
 where
 $v_1, v_2, \ldots,v_m \in V_{\ell}$.
 By \eqref{engin_it4}, every
 vertex of $V_{\ell}$ appears at least
 one time in the word $\tau_{\ell}(w_0)$,
 and hence
 $ V_{\ell}=\{v_{1},v_2, \ldots,v_m\}$
 as sets. As $\mathrm{card}(V_{\ell})\geq 2$, we see that $m\geq 2$.
Suppose that
 $\sigma_{[n,\ell]}^{B_2}(w_{0})=
 w_1 w_2 \cdots w_r$
 where 
 $w_1, w_2, \ldots,w_r \in W_{n}$.
 Set $z_j=\tau_n(w_{j})$ for all
 $1\leq j\leq r$.
 Using the ordered commutativity of $f$
at the second step (see equality \eqref{equ_ordcom_mor}),
we get
\begin{eqnarray}\label{equ_engin_ordcom}
 \sigma_{[n,\ell]}^{B_1}(v_1)
 \sigma_{[n,\ell]}^{B_1}(v_2)\cdots
 \sigma_{[n,\ell]}^{B_1}(v_m) &=&
 \sigma_{[n,\ell]}^{B_1}(\tau_{\ell}(w_0))
 \notag\\
 &=&\tau_{n}(\sigma_{[n,\ell]}^{B_2}(w_{0}))\notag\\
 &=&\tau_{n}(w_1  w_2  \cdots w_r)\\
 &=& z_1  z_2  \cdots z_r.\notag
\end{eqnarray}
For all $1\leq k\leq m$,
by \eqref{engin_it2},
the word
 $ \sigma_{\ell}^{B_1}(v_k)$
 starts with $v_{\min}^{\ell-1}$
 and ends up with $v_{\max}^{\ell-1}$.
Hence, for all $1\leq k\leq m$,
\begin{enumerate}
\setcounter{enumi}{\value{TmpEnumi}}
\item\label{engin_it8}
$\sigma_{[n,\ell]}^{B_1}(v_k)$
 starts with
$ \sigma_{[n,\ell-1]}^{B_1}(v_{\min}^{\ell-1})$ and ends up with
$\sigma_{[n,\ell-1]}^{B_1}(v_{\max}^{\ell-1})$.
\setcounter{TmpEnumi}{\value{enumi}}
\end{enumerate} 
In particular,   
by \eqref{engin_it7}, 
$ \big|\sigma_{[n,\ell]}^{B_1}(v_k)\big|>
|z_j|$, for all $1\leq j\leq r$
 and all $1\leq k\leq m$.
 Using this and \eqref{equ_engin_ordcom},
 it follows that there are words
 $s_1,s_2,\ldots,s_{m-1}$ and
 $t_1,t_2,\ldots,t_{m-1}$
 in $V_{n}^{*}$
 and words
 $T_1,T_2,\ldots,T_{m}$ in
 $\{z_1, \dots, z_{r}\}^*$
 such that

\begin{equation}\label{equ_engin_st}
\sigma_{[n,\ell]}^{B_1}(v_1)=T_1s_1,\ \ \sigma_{[n,\ell]}^{B_1}(v_k)=t_{k-1}T_k s_k, \ \ \sigma_{[n,\ell]}^{B_1}(v_m)=t_{m-1}T_m,
\end{equation}
 for all $1<k<m$,
where
  for every $i=1, \dots, m-1$,
  $s_i=t_i=\emptyset$ or
   both 
  $s_i$ and $t_i$ are nonempty
  and $s_it_i=z_{j}$
   for some $1\leq j\leq r$. 
This means that we can write
\[
\sigma_{[n ,\ell]}^{B_1}(\tau_\ell(w_{0}))=T_1s_1t_1T_2s_2t_2T_3s_3\cdots s_{m-1}t_{m-1}T_m=z_1  z_2  \cdots z_r.
\]

 Now we want to analyze $s_i$'s and $t_i$'s to see how one can generate all of them by a set of words $C_{n}$  of cardinal less than or  equal to 
 $3\,  \mathrm{card}(W_n)$. Set
 \[
s=\sigma_{[n,\ell-1]}^{B_1}(v_{\max}^{\ell-1})\
\ \ \text{and}\ \ \
t=\sigma_{[n,\ell-1]}^{B_1}(v_{\min}^{\ell-1}).
 \]
 By \eqref{engin_it7},
 $|s|,|t|> |z_j|$, for all $j=1,\ldots,r$.
 Also, by \eqref{engin_it8},
 every $\sigma_{[n,\ell]}^{B_1}(v_k)$
 starts with
$ t$ and ends up with
$s$.
Then \eqref{equ_engin_st} implies
that each
  $s_i$ is a proper suffix of $s$ and
  each $t_i$ is a proper prefix of $t$.  
Note that we have
\begin{equation*}
\big\{s_i t_i \colon 1\leq i < m \big\}
\setminus \{\emptyset\}
\subseteq
\big\{z_j \colon 1\leq j \leq r \big\}
\subseteq
\big\{\tau_n(w) \colon \ w\in W_n \big\}.
\end{equation*}
Let $w\in W_{n}$. If there is no $i$
with $s_i t_i =\tau_{n}(w)$
then we simply set
$C_{w}=\{\tau_{n}(w)\}$.
Otherwise, 
we apply Lemma~\ref{basis}
with $\tau_{n}(w)$ in place of $w$,
with $s$ and $t$ as above, and
with $s_i$'s and $t_i$'s
satisfying $s_i t_i =\tau_{n}(w)$
to obtain a set of words 
$C_w\subseteq V_{n}^{*}$ such that
\begin{enumerate}
\setcounter{enumi}{\value{TmpEnumi}}
\item\label{engin_it9}
$\mathrm{card}(C_w)\leq 3$;
\item\label{engin_it10}
$s_i, t_i\in C_{w}^*$
 for every 
$1\leq i< m$ with $s_i t_i= \tau_{n}(w)$.
\setcounter{TmpEnumi}{\value{enumi}}
\end{enumerate}

Now consider the set of words
\[
C_n=\cup_{w\in W_n}C_w \subseteq V_{n}^{*}.
\]
First note that $\tau_{n}(w)\in C_{n}^{*}$
for every $w\in W_{n}$, and
hence 
$z_1,\ldots,z_r\in C_{n}^{*}$.
Moreover, $s_i,t_i \in C_{n}^{*}$ 
for every $1\leq i<m$, because
if $s_i$ and $t_i$ are nonempty
then there is some $j$ with 
$s_i t_i=z_j=\tau_{n}(w_j)$
and hence $s_i, t_i\in C_{w_j}^*$ 
by \eqref{engin_it10}.
Therefore, 
$\sigma_{[n,\ell]}^{B_1}(v_k)\in C_{n}^{*}$
for all $1\leq k\leq m$.
Since  $ V_{\ell}=\{v_{1},v_2, \ldots,v_m\}$, this implies that 
$\sigma_{[n,\ell]}^{B_1}(v)\in C_{n}^{*}$
for all $v\in V_{\ell}$.
This is \eqref{engin_it5}.
Also, \eqref{engin_it6} follows
from
\eqref{engin_it9}.
This finishes the proof of the claim.

\medskip

To complete the proof of the proposition, 
we will find an
ordered Bratteli diagram
$B_{1}''$
equivalent to $B_1$
such that $\mathrm{rank}(B_{1}'')\leq3\,\mathrm{rank}(B_{2})$.
For this, first we put
$\ell_1=1$ and we apply the claim
with $n=\ell_1$ to obtain a natural number
 $\ell_2>\ell_1$ and a set of words
  $C_{\ell_1}\subseteq V_{\ell_1}^*$ such that 
$\sigma_{[\ell_1,\ell_2]}^{B_1}(v)\in C_{\ell_1}^*$, for all $v\in V_{\ell_2}$, and 
${\rm card}(C_{\ell_1})\leq 3\, {\rm card}(W_{\ell_1})$.
Then we apply the claim with $n=\ell_2$
to obtain $\ell_3>\ell_2$ and 
$C_{\ell_{2}}$.
Continuing this procedure,
we obtain a strictly increasing
sequence
$(\ell_k)_{k=1}^{\infty}$
and a sequence $(C_{\ell_{k}})_{k=1}^{\infty}$
such that $C_{\ell_{k}}\subseteq V_{\ell_{k}}^{*}$, 
$\sigma_{[\ell_k,\ell_{k+1}]}^{B_1}(V_{\ell_{k+1}})\subseteq C_{\ell_k}^*$, 
and 
${\rm card}(C_{\ell_k})\leq 3\, {\rm card}(W_{\ell_k})$.
By passing to a subset of
$C_{\ell_{k}}$
 if necessary, 
 may assume that 
$C_{\ell_{k}}$ is a minimal subset
of $ V_{\ell_{k}}^{*}$
(with respect to the inclusion relation)
with the property
$\sigma^{B_1}_{[\ell_k, \ell_{k+1}]}(V_{\ell_{k+1}})\subset C_{\ell_{k}}^*$.

Now define an ordered
Bratteli diagram $B'_1=(V',E',\geq')$
(which will be a microscoping of a telescoping
of $B_1$) 
as follows. Set $\ell_{0}=0$ and
$C_{\ell_{0}}=V_0$.
The vertices of $B_1'$ are defined by
\[
V'_{2k}=C_{\ell_{k}}\ \ \
\text{and}\ \ \ 
V'_{2k+1}=V_{\ell_{k+1}},\ \ k\geq 0.
\]
For the set $E'=\cup_{k=1}^{\infty}E'_{k}$
 of edges of $B_{1}'$, first
 we set $E'_{1}=E_{1}$.
Next, let $k\geq 1$. 
Applying Lemma~\ref{packing} to 
the telescoping of $B_1$ along the
sequence 
\[
0,1,2,\ldots, \ell_{k},\ell_{k+1},
\ell_{k+1}+1,\ell_{k+1}+2,\ldots,
\]
we obtain two sets
$E'_{2k}$ and $E'_{2k+1}$
of edges
from $V_{\ell_{k}}$ to 
$C_{\ell_{k}}$ and
from $C_{\ell_{k}}$ to
 $V_{\ell_{k+1}}$,
such that
 $E'_{2k}\circ E'_{2k+1}$
 is order isomorphic to $E_{\ell_{k},\ell_{k+1}}$
 (see  the proof of Lemma~\ref{packing}).
 Thus 
$E'_{2k}$ is a set of edges from 
$V'_{2{k}-1}$ to $V'_{2{k}}$,
and $E'_{2k+1}$ is a set of edges from 
$V'_{2{k}}$ to $V'_{2{k}+1}$.
In this way, we obtain
an ordered Bratteli diagram
 $B'_1=(V',E',\geq')$.

Let $B_1'' =(V'',E'',\geq'')$
be the telescoping of 
 $B'_1$ along the even levels.
 Thus $V_{k}''=C_{\ell_{k}}$ for every
 $k\geq 0$.
 Since the telescoping of  $B'_1$ along the odd levels is to isomorphic 
 the telescoping of $B_{1}$ along
 the sequence $(\ell_{k})_{k=0}^{\infty}$,
 it follows that  $B'_1$
 is  simple and properly ordered,
 and hence so is $B_{1}''$.
 Moreover, $B_{1}$, $B_{1}'$, and
 $B_{1}''$ are  isomorphic as two ordered Bratteli diagrams,
 and so their associated 
 Vershik systems are conjugate.
Therefore,
\begin{eqnarray*}
{\rm rank}_{\rm top}(X_{B_1},\, T_{B_1})&=&
{\rm rank}_{\rm top}(X_{B_1''},\, T_{B_1''})\\
&\leq&
{\rm rank}(B_1'')\\ &=&
\sup\big\{\mathrm{card}(C_{\ell_{k}})\colon
k\geq 0\big\}\\
&\leq &
\sup\big\{3\,\mathrm{card}(W_{\ell_{k}})\colon
k\geq 0\big\}\\
&\leq &
 3\,  {\rm rank}(B_2).
\end{eqnarray*}
This finishes the proof.
\end{proof}

Now we have the requires for the proof of our main result.

\begin{proof}[Proof of Theorem \ref{main}]
Choose a point $x\in X$ so that the properly ordered Bratteli diagram $B_1$ associated to $(X,\,T)$ with base on $x_{\min}:=x$  realizes the topological rank of $(X,\,T)$, that is ${\rm rank}_{\rm top}(X,\, T)={\rm rank}(B_1)$. Suppose that $y:=\alpha(x)\in Y$ and  let $B_2$ be  the simple properly ordered Bratteli diagram associated to $(Y,\,S)$ based on the point $y_{\min}:=y$. Then by Proposition \ref{AEG_prop} there exists an ordered premorphism $f:B_2\rightarrow B_1$. Using the conjugacy of $(Y,\, S)$ and $(X_{B_2},\, T_{B_2})$ and  Proposition \ref{engin} one can conclude that
 $${\rm rank}_{\rm top}(Y,\,S)={\rm rank}_{\rm top}(X_{B_2},\, T_{B_2})\leq 3 \ {\rm rank}(B_1)=3\ {\rm rank}(X, T)$$
 as desired.
\end{proof}

\section{Conjugacy}
In this section we obtain a combinatorial criterion 
to verify conjugacy of two Cantor minimal systems.
Suppose that $(X,T)$ and $(Y,S)$ are
Cantor minimal systems
(or, more generally, essentially minimal
systems) for which there are Bratteli-Vershik 
models $B_2$ and $B_1$, respectively, with an ordered
premorphism $f\colon B_1\to B_2$.
This gives a factor map $\alpha\colon X\to Y$,
by Proposition~\ref{AEG_prop}. 
However, 
if $f$ satisfies the conditions of the following proposition
then $B_1$ is equivalent to $B_2$ and hence
$(X,T)$ is conjugate to $(Y,S)$.

We need the following notion in the sequel.

\begin{definition}\label{def_ind}
Let $A$ be an alphabet. We say that
a set of words $B\subseteq A^{*}$
is \emph{independent}
if every word in $B^{*}$
has a unique representation
in terms of the elements of $B$.
\end{definition}

For example, if no word in $B$ is
a prefix of another word, then 
$B$ is independent.

\begin{proposition}\label{prop_conj}
Let $f\colon B_1\to B_2$ be an 
ordered premorphism between two 
ordered Bratteli diagrams. Assume the notation in 
Definitions~\ref{defpreobd} and \ref{new}.
Suppose that for infinitely many
$n\in \mathbb{N}$ the followings hold:
\begin{enumerate}
\item\label{prop_conj_it1}
the set $D_n =\{\tau_n(w) : w\in W_{f_{n}}\}\subseteq V_{n}^{*}$
is independent;

\item\label{prop_conj_it2}
there is $\ell>n$ such that
$\sigma_{[n,\ell]}^{B_1}(v)\in D_n^*$, for all $v\in V_\ell$, and 
$D_n$ is a minimal subset of $V_{n}^{*}$
(with respect to inclusion)
having this property.
\end{enumerate}
Then $B_1$ is equivalent to $B_2$.
\setcounter{TmpEnumi}{\value{enumi}}
\end{proposition}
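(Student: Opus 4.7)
The plan is to refine the microscoping-then-telescoping construction from the proof of Proposition~\ref{engin}: the independence hypothesis (1) promotes the coarse rank inequality there into an honest identification of ordered Bratteli diagrams.

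First, after telescoping $B_2$ along the cofinal sequence $(f_n)$, I may assume $f_n=n$ for all $n$. Let $(n_k)_{k\geq 1}$ be a strictly increasing sequence of indices at which both (1) and (2) hold, with associated integers $\ell_k>n_k$ given by (2); after passing to a further subsequence, arrange that $\ell_k<n_{k+1}$. Then telescope $B_1$ along $0<n_1<\ell_1<n_2<\ell_2<\cdots$ so that between $V_{n_k}$ and $V_{\ell_k}$ there is a single edge set realizing $\sigma_{[n_k,\ell_k]}^{B_1}$. By (2), this morphism sends each $v\in V_{\ell_k}$ into $D_{n_k}^{*}$ with $D_{n_k}$ minimal, so Lemma~\ref{packing} applies and yields an ordered Bratteli diagram $B_1'$, isomorphic to $B_1$, in which $D_{n_k}$ is inserted as a vertex level between $V_{n_k}$ and $V_{\ell_k}$ for every $k$. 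In $B_1'$ the downgoing morphism $D_{n_k}\to V_{n_k}^{*}$ sends $d\mapsto d$ (the word $d$ itself), and the downgoing morphism $V_{\ell_k}\to D_{n_k}^{*}$ records the decomposition of $\sigma_{[n_k,\ell_k]}^{B_1}(v)$ into letters of $D_{n_k}$, which is unambiguous by independence (1).

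Now telescope $B_1'$ onto the sub-levels $(D_{n_k})_{k\geq 1}$ to obtain $B_1''$, still equivalent to $B_1$. The central computation is to identify the morphism $\sigma_{k+1}^{B_1''}\colon D_{n_{k+1}}\to D_{n_k}^{*}$. For $w\in W_{n_{k+1}}$ and $d=\tau_{n_{k+1}}(w)\in D_{n_{k+1}}$, composing the three intermediate morphisms of $B_1'$ from $D_{n_{k+1}}$ through $V_{n_{k+1}}$ and $V_{\ell_k}$ down to $D_{n_k}$ yields the decomposition, in $D_{n_k}^{*}$, of the word $\sigma_{[n_k,n_{k+1}]}^{B_1}(d)\in V_{n_k}^{*}$. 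By the iterated ordered commutativity~\eqref{equ_ordcom_mor},
\[
\sigma_{[n_k,n_{k+1}]}^{B_1}\bigl(\tau_{n_{k+1}}(w)\bigr)=\tau_{n_k}\bigl(\sigma_{[n_k,n_{k+1}]}^{B_2}(w)\bigr).
\]
Writing $\sigma_{[n_k,n_{k+1}]}^{B_2}(w)=w_1'w_2'\cdots w_r'$ with $w_i'\in W_{n_k}$, the right-hand side factors in $D_{n_k}^{*}$ as $\tau_{n_k}(w_1')\tau_{n_k}(w_2')\cdots \tau_{n_k}(w_r')$, and by independence of $D_{n_k}$ this factorization is unique; hence $\sigma_{k+1}^{B_1''}(d)=\tau_{n_k}(w_1')\cdots\tau_{n_k}(w_r')$. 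Under the bijection $\tau_{n_k}\colon W_{n_k}\to D_{n_k}$ supplied by (1), this reads as $\sigma_{[n_k,n_{k+1}]}^{B_2}(w)$, so $B_1''$ is isomorphic, as an ordered Bratteli diagram, to the telescoping of $B_2$ along $(n_k)_{k\geq 0}$. Chaining the equivalences $B_1\sim B_1'\sim B_1''\sim B_2$ gives the conclusion.

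The main technical obstacle is to verify that the edge orderings introduced by the packing lemma at each inserted $D_{n_k}$ level transport correctly, through the final telescoping, to the edge orderings on $W_{n_k}$ coming from $B_2$; in other words, that the set-level bijection $\tau_{n_k}\colon W_{n_k}\to D_{n_k}$ is actually an isomorphism of ordered Bratteli levels. Once one tracks that the ordering on incoming edges at each $d\in D_{n_k}$ is determined by the letters of the word $d$, and that the ordering on incoming edges at each $v\in V_{\ell_k}$ is determined by the unique decomposition of $\sigma_{[n_k,\ell_k]}^{B_1}(v)$ in $D_{n_k}^{*}$, the required compatibility follows from the ordered commutativity of $f$ together with the uniqueness of decomposition provided by independence.
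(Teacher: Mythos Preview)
Your proof is correct and follows essentially the same strategy as the paper's: both insert the words $D_n$ (identified with $W_n$ via $\tau_n$) as intermediate vertex levels and use independence to pin down the edge structure uniquely. The only cosmetic difference is that the paper builds a single common refinement $V_0\to V_1\to W_1\to V_2\to W_2\to\cdots$ by constructing the edge sets $G_n$ directly and checking the two triangle commutativities $F_n\circ G_n\cong E_n$ and $G_n\circ F_{n+1}\cong S_n$, whereas you invoke the packing lemma, telescope away the $V$ levels, and then identify the quotient with a telescoping of $B_2$ via the bijections $\tau_{n_k}$.
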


\begin{proof}
By passing to appropriate telescopings of $B_1$ and
$B_2$,
we may assume that
$f_n=n$ for any $n\geq 0$,
 \eqref{prop_conj_it1}
and \eqref{prop_conj_it2} hold
for every $n\in \mathbb{N}$,
and $\ell=n+1$ in \eqref{prop_conj_it2}.
To prove that  $B_1$ is equivalent to $B_2$
we will construct an ordered Bratteli diagram $B$ whose telescoping along
the odd (respectively, even) levels is equivalent to $B_1$
(respectively, $B_2$).

In the sequel, by an ordered set $G_n$ of edges from $W_n$
to $V_{n+1}$ we mean a finite nonempty set of edges
with a pair of source and range maps
$s\colon G_n\to W_{n} $  and 
$r\colon G_n\to V_{n+1}$, respectively, and with
a partial ordering on $G_n$ such that
$g,g'\in G_n$ are comparable if and only if
$r(g)=r(g')$,  the restriction of this ordering
on each $r^{-1}\{v\}$, $v\in V_{n+1}$, is a linear ordering,
and $r^{-1}\{v\}$ and $s^{-1}\{w\}$
are nonempty for all  $v\in V_{n+1}$ and $w\in W_{n}$.

\medskip

We claim that for every $n\in\mathbb{N}$
there is an ordered set 
$G_n$ of edges from $W_n$
to $V_{n+1}$ such that

\begin{enumerate}
\setcounter{enumi}{\value{TmpEnumi}}
\item\label{prop_conj_it3}
$F_n\circ G_n$ is order isomorphic
to $E_n$;

\item\label{prop_conj_it4}
$G_n\circ F_{n+1}$ is order isomorphic
to $S_n$;

\item\label{prop_conj_it5}
for every $w\in W_{n}$ and every
$v\in V_{n+1}$ there are $g,g'\in G_{n}$
with $s(g)=w$ and $r(g')=v$.
\end{enumerate}
In fact, \eqref{prop_conj_it3}
and \eqref{prop_conj_it4} say that in the following diagram, the two triangles are ordered commutative.

\[
\xymatrix{V_{n}\ar[r]^{F_{n}}\ar[d]_{E_{n}}
 & W_{n}\ar[d]^{S_{n}} \ar[ld]_{G_{n}} \\
 V_{n+1} \ar[r]_{F_{n+1}}
 & W_{n+1}
 }
\]
\medskip

To prove the claim, first note that by 
\eqref{prop_conj_it1} and \eqref{prop_conj_it2}
(recall that $\ell=n+1$ in \eqref{prop_conj_it2}), for every 
$v\in V_{n+1}$ the word
$\sigma_{n+1}^{B_1}(v)$
has a unique representation 
with respect to  $D_{n}$ which is
\begin{equation}\label{equ1_prop_conj}
\sigma_{n+1}^{B_1}(v)=\tau_{n}(w_{i_{1}})
\tau_{n}(w_{i_{2}})\cdots
\tau_{n}(w_{i_{k}}),
\end{equation}
for some $w_{i_{1}},w_{i_{2}},\ldots
w_{i_{k}}$ in $W_n$.
(Note that, by the independence of $D_n$, 
we have  implicitly assumed that
$\tau_n(w)\neq \tau_n(w')$
for $ w\neq  w'$.) Hence, there is
a (necessarily unique) ordered set of 
edges $G_n$ from $W_n$ to $V_{n+1}$
that induces a morphism
$\sigma_n\colon V_{n+1}\to W_{n}^{*}$
such that for every $v\in V_{n+1}$ (by
\eqref{equ1_prop_conj}), 
\begin{equation}\label{equ2_prop_conj}
\sigma_{n}(v)= w_{i_{1}}
w_{i_{2}}\cdots
w_{i_{k}}.
\end{equation}
Moreover, since  $D_n$ is  minimal, for every $w\in W_n$
there exists some $v\in V_{n+1}$ such that $w$ occurs as a letter of
$\sigma_{n}(v)$. Thus \eqref{prop_conj_it5}
holds.

To see \eqref{prop_conj_it3}, it is enough
to show that the morphisms
$\tau_n \circ \sigma_n$ and 
$\sigma_{n+1}^{B_1}$ (associated to
$F_n \circ G_n$ and $E_n$, respectively)
are the same. Let $v\in V_{n+1}$ satisfy
\eqref{equ1_prop_conj}. Then
\[
\sigma_{n+1}^{B_1}(v)=\tau_{n}(w_{i_{1}})
 \cdots
\tau_{n}(w_{i_{k}})=
\tau_{n}(w_{i_{1}}
 \cdots
w_{i_{k}})=
\tau_{n}(\sigma_n(v)).
\]
Thus 
$\sigma_{n+1}^{B_1}=\tau_n \circ \sigma_n$,
proving \eqref{prop_conj_it3}. For
\eqref{prop_conj_it4}, first by
\eqref{prop_conj_it3} and the ordered
commutativity of $f$ we have
\begin{equation}\label{equ3_prop_conj}
F_n \circ G_n  \circ F_{n+1}\cong
E_n  \circ F_{n+1}\cong
F_n \circ S_n,
\end{equation}
(where $\cong$ means ordered isomorphism
between the edge sets preserving the
source and range maps).
Then by the assumption ~\eqref{prop_conj_it1}
 one can
eliminate $F_n$ from
\eqref{equ3_prop_conj} to get 
$ G_n  \circ F_{n+1}\cong
 S_n$. In fact, for every $w\in W_{n+1}$,
 by \eqref{equ3_prop_conj} we have
 \[
 \tau_n\big(\sigma_n(\tau_{n+1}(w))\big)=
 \tau_n\big(\sigma_{n+1}^{B_2}(w)\big)
 \]
and the latter (with the independence assumption)  implies that
$\sigma_n(\tau_{n+1}(w))=\sigma_{n+1}^{B_2}(w)$.
This gives \eqref{prop_conj_it3}
and finishes the proof of the claim.

Now consider the following ordered Bratteli
diagram $B$:
\[
\xymatrix{V_{0}\ar[r]^-{\resizebox{1.15em}{.65em}{$E_{1}$}}
 &V_{1}\ar[r]^-{\resizebox{1.15em}{.65em}{$F_{1}$}} 
 &W_{1}\ar[r]^-{\resizebox{1.15em}{.65em}{$G_{1}$}}&
 V_{2}\ar[r]^-{\resizebox{1.15em}{.65em}{$F_{2}$}}
 &W_{2}\ar[r]^-{\resizebox{1.15em}{.65em}{$G_{2}$}} 
 &V_{3}\ar[r]^-{\resizebox{1.15em}{.65em}{$F_{3}$}}
 &\cdots.
 }
\]
By \eqref{prop_conj_it3},
the telescoping of $B$ along the odd
levels (starting with $V_0$ as the 
zeroth level) is equivalent to
$B_1$, and by \eqref{prop_conj_it4},
the telescoping  along the even
levels  is equivalent to
$B_2$
(note that $E_1 \circ F_1\cong
F_0\circ S_1\cong S_1$
by the ordered commutativity of $f$).
Therefore,  $B_1$ is equivalent to $B_2$.
\end{proof}

Here is an example.

\begin{figure}
\begin{center}
\begin{tikzpicture}[scale=1.5]

\filldraw (12,10) circle [radius=0.1];
\filldraw (10,7) circle [radius=0.1];
\filldraw (12,7) circle [radius=0.1];
\filldraw (14,7) circle [radius=0.1];
\node at (10.35,7) {$u_1$};
\node at (12.3,7) {$v_1$};
\node at (14.3,7) {$w_1$};
\filldraw (10,4) circle [radius=0.1];
\filldraw (12,4) circle [radius=0.1];
\filldraw (14,4) circle [radius=0.1];
\node at (10.35,4) {$u_2$};
\node at (12.3,4) {$v_2$};
\node at (14.3,4) {$w_2$};
\filldraw (10,1) circle [radius=0.1];
\filldraw (12,1) circle [radius=0.1];
\filldraw (14,1) circle [radius=0.1];
\node at (10.35,1) {$u_3$};
\node at (12.3,1) {$v_3$};
\node at (14.3,1) {$w_3$};

\draw (11.85,9.9)--(9.95,7.12);
\draw (11.9,9.68)--(10.13,7.05);

\draw (11.9,9.68)--(11.9,7.12);
\draw (12.1,9.68)--(12.1,7.12);

\draw (12.1,9.68)--(13.87,7.05);
\draw (12.15,9.9)--(14.05,7.12);

\node at (10.85,8.6) {\tiny{1}};
\node at (11.15,8.4) {\tiny{5}};
\node at (10.96,8.51) {.};
\node at (11,8.48) {.};
\node at (11.04,8.45) {.};

\node at (11.83,8.3) {\tiny{1}};
\node at (12.17,8.3) {\tiny{9}};
\node at (11.95,8.3) {.};
\node at (12,8.3) {.};
\node at (12.05,8.3) {.};

\node at (13,8.2) {\tiny{1}};
\node at (13.28,8.43) {\tiny{13}};
\node at (13.08,8.28) {.};
\node at (13.12,8.32) {.};
\node at (13.16,8.36) {.};

\draw (10,6.87)--(10,4.14);
\draw (12,6.87)--(12,4.14);
\draw (13.98,6.87)--(13.98,4.14);
\draw (14.06,6.87)--(14.06,4.14);
\draw (11.97,6.85)--(10.08,4.1);
\draw (13.95,6.85)--(12.08,4.1);
\draw (12.02,6.85)--(13.96,4.14);
\draw (10.1,6.9)--(11.91,4.1);
\draw (10.15,6.95)--(13.94,4.14);

\draw (10,3.87)--(10,1.14);
\draw (12,3.87)--(12,1.14);
\draw (13.98,3.87)--(13.98,1.14);
\draw (14.06,3.87)--(14.06,1.14);
\draw (11.97,3.85)--(10.08,1.1);
\draw (13.95,3.85)--(12.08,1.1);
\draw (12.02,3.85)--(13.96,1.14);
\draw (10.1,3.9)--(11.91,1.1);
\draw (10.15,3.95)--(13.94,1.14);

\node at (9.92,4.35) {\tiny{1}};
\node at (10.15,4.35) {\tiny{2}};
\node at (11.78,4.4) {\tiny{1}};
\node at (11.93,4.37) {\tiny{2}};
\node at (12.15,4.35) {\tiny{3}};
\node at (13.53,4.52) {\tiny{1}};
\node at (13.77,4.52) {\tiny{2}};
\node at (13.91,4.52) {\tiny{3}};
\node at (14.15,4.52) {\tiny{4}};

\node at (9.92,1.35) {\tiny{1}};
\node at (10.15,1.35) {\tiny{2}};
\node at (11.79,1.4) {\tiny{1}};
\node at (11.93,1.37) {\tiny{2}};
\node at (12.15,1.35) {\tiny{3}};
\node at (13.53,1.52) {\tiny{1}};
\node at (13.77,1.52) {\tiny{2}};
\node at (13.91,1.52) {\tiny{3}};
\node at (14.15,1.52) {\tiny{4}};

\filldraw (6,10) circle [radius=0.1];
 \filldraw (5,7) circle [radius=0.1];
\filldraw (7,7) circle [radius=0.1];
 \node at (4.75,7) {$x_1$};
  \node at (6.75,7) {$y_1$};
 \filldraw (5,4) circle [radius=0.1];
\filldraw (7,4) circle [radius=0.1];
 \node at (4.75,4) {$x_2$};
  \node at (6.75,4) {$y_2$};
 \filldraw (5,1) circle [radius=0.1];
\filldraw (7,1) circle [radius=0.1];
 \node at (4.75,1) {$x_3$};
  \node at (6.75,1) {$y_3$};

\draw (5.91,9.92)--(4.96,7.12);
\draw (5.98,9.88)--(5.05,7.13);

\draw (6.07,9.91)--(6.97,7.12);

\draw (4.95,6.87)--(4.95,4.14);
\draw (4.95,3.87)--(4.95,1.14);
\draw (5.03,6.87)--(5.03,4.14);
\draw (5.03,3.87)--(5.03,1.14);

\draw (6.95,6.87)--(6.95,4.14);
\draw (6.95,3.87)--(6.95,1.14);
\draw (7.03,6.87)--(7.03,4.14);
\draw (7.03,3.87)--(7.03,1.14);

\draw (6.9,6.9)--(5.05,4.15);
\draw (6.9,3.9)--(5.05,1.15);
\draw (5.1,6.9)--(6.9,4.15);
\draw (5.1,3.9)--(6.9,1.15);


\node at (5.06,7.7) {\tiny{1}};
\node at (5.32,7.7) {\tiny{2}};

\node at (7.11,4.39) {\tiny{3}};
\node at (6.89,4.39) {\tiny{2}};
\node at (6.66,4.39) {\tiny{1}};
\node at (5.12,4.7) {\tiny{2}};
\node at (4.89,4.7) {\tiny{1}};
\node at (5.33,4.7) {\tiny{3}};
\node at (7.11,1.39) {\tiny{3}};
\node at (6.89,1.39) {\tiny{2}};
\node at (6.66,1.39) {\tiny{1}};
\node at (5.12,1.7) {\tiny{2}};
\node at (4.89,1.7) {\tiny{1}};
\node at (5.33,1.7) {\tiny{3}};

\draw[->, thick] (6.1,10.1) [out=10, in=170] to (11.9,10.1);

\draw[->, thick] (5.15,7) [out=10, in=170] to (9.85,7);
\draw[->, thick] (5.15,7.05) [out=10, in=170] to (9.85,7.1);
\draw[->, thick] (7.15,6.95) [out=-10, in=190] to (9.85,6.88);

\draw[->, thick] (5.15,7.16) [out=20, in=156] to (11.85,7.24);
\draw[->, thick] (5.15,7.12) [out=20, in=158] to (11.85,7.12);
\draw[->, thick] (5.15,7.08) [out=20, in=160] to (11.85,7);
\draw[->, thick] (7.15,6.9) [out=-15, in=200] to (11.85,6.94);
\draw[->, thick] (7.15,6.86) [out=-15, in=205] to (11.85,6.85);
\draw[->, thick] (7.15,6.83) [out=-15, in=210] to (11.85,6.76);

\draw[->, thick] (5.15,7.32) [out=30, in=154] to (13.66,7.3);
\draw[->, thick] (5.15,7.28) [out=30, in=156] to (13.66,7.18);
\draw[->, thick] (5.15,7.24) [out=30, in=158] to (13.66,7.06);
\draw[->, thick] (5.15,7.2) [out=30, in=160] to (13.66,6.94);
\draw[->, thick] (7.15,6.8) [out=-30, in=200] to (13.66,6.9);
\draw[->, thick] (7.15,6.77) [out=-30, in=204] to (13.66,6.8);
\draw[->, thick] (7.15,6.74) [out=-30, in=208] to (13.66,6.7);
\draw[->, thick] (7.15,6.71) [out=-30, in=212] to (13.66,6.6);
\draw[->, thick] (7.15,6.68) [out=-30, in=216] to (13.66,6.5);

\draw[->, thick] (5.15,4) [out=10, in=170] to (9.85,4);
\draw[->, thick] (5.15,4.05) [out=10, in=170] to (9.85,4.1);
\draw[->, thick] (7.15,3.95) [out=-10, in=190] to (9.85,3.88);

\draw[->, thick] (5.15,4.16) [out=20, in=156] to (11.78,4.24);
\draw[->, thick] (5.15,4.12) [out=20, in=158] to (11.85,4.12);
\draw[->, thick] (5.15,4.08) [out=20, in=160] to (11.85,4);
\draw[->, thick] (7.15,3.9) [out=-15, in=200] to (11.85,3.94);
\draw[->, thick] (7.15,3.86) [out=-15, in=205] to (11.85,3.85);
\draw[->, thick] (7.15,3.83) [out=-15, in=210] to (11.85,3.76);

\draw[->, thick] (5.15,4.32) [out=30, in=154] to (13.66,4.3);
\draw[->, thick] (5.15,4.28) [out=30, in=156] to (13.66,4.18);
\draw[->, thick] (5.15,4.24) [out=30, in=158] to (13.66,4.06);
\draw[->, thick] (5.15,4.2) [out=30, in=160] to (13.66,3.94);
\draw[->, thick] (7.15,3.8) [out=-30, in=200] to (13.66,3.9);
\draw[->, thick] (7.15,3.77) [out=-30, in=204] to (13.66,3.8);
\draw[->, thick] (7.15,3.74) [out=-30, in=208] to (13.66,3.7);
\draw[->, thick] (7.15,3.71) [out=-30, in=212] to (13.66,3.6);
\draw[->, thick] (7.15,3.68) [out=-30, in=216] to (13.66,3.5);

\draw[->, thick] (5.15,1) [out=10, in=170] to (9.85,1);
\draw[->, thick] (5.15,1.05) [out=10, in=170] to (9.85,1.1);
\draw[->, thick] (7.15,0.95) [out=-10, in=190] to (9.85,0.88);

\draw[->, thick] (5.15,1.16) [out=20, in=156] to (11.78,1.24);
\draw[->, thick] (5.15,1.12) [out=20, in=158] to (11.85,1.12);
\draw[->, thick] (5.15,1.08) [out=20, in=160] to (11.85,1);
\draw[->, thick] (7.15,0.9) [out=-15, in=200] to (11.85,0.94);
\draw[->, thick] (7.15,0.86) [out=-15, in=205] to (11.85,0.85);
\draw[->, thick] (7.15,0.83) [out=-15, in=210] to (11.85,0.76);

\draw[->, thick] (5.15,1.32) [out=30, in=154] to (13.66,1.3);
\draw[->, thick] (5.15,1.28) [out=30, in=156] to (13.66,1.18);
\draw[->, thick] (5.15,1.24) [out=30, in=158] to (13.66,1.06);
\draw[->, thick] (5.15,1.2) [out=30, in=160] to (13.66,0.94);
\draw[->, thick] (7.15,0.8) [out=-30, in=200] to (13.66,0.9);
\draw[->, thick] (7.15,0.77) [out=-30, in=204] to (13.66,0.8);
\draw[->, thick] (7.15,0.74) [out=-30, in=208] to (13.66,0.7);
\draw[->, thick] (7.15,0.71) [out=-30, in=212] to (13.66,0.6);
\draw[->, thick] (7.15,0.68) [out=-30, in=216] to (13.66,0.5);

\node at (9.6,7.22) {\tiny{1}};
\node at (9.6,6.96) {\tiny{2}};
\node at (9.6,6.75) {\tiny{3}};

\node at (11.5,7.47) {\tiny{1}};
\node at (11.5,7.27) {\tiny{2}};
\node at (11.5,7.1) {\tiny{4}};
\node at (11.5,6.9) {\tiny{3}};
\node at (11.5,6.72) {\tiny{5}};
\node at (11.5,6.5) {\tiny{6}};

\node at (13.28,7.57) {\tiny{1}};
\node at (13.28,7.38) {\tiny{2}};
\node at (13.28,7.25) {\tiny{4}};
\node at (13.28,7.1) {\tiny{7}};
\node at (13.28,6.85) {\tiny{3}};
\node at (13.38,6.7) {\tiny{5}};
\node at (13.28,6.54) {\tiny{6}};
\node at (13.38,6.43) {\tiny{8}};
\node at (13.38,6.21) {\tiny{9}};

\node at (9.6,4.22) {\tiny{1}};
\node at (9.6,3.96) {\tiny{2}};
\node at (9.6,3.75) {\tiny{3}};

\node at (11.5,4.45) {\tiny{1}};
\node at (11.5,4.27) {\tiny{2}};
\node at (11.5,4.1) {\tiny{4}};
\node at (11.5,3.9) {\tiny{3}};
\node at (11.5,3.72) {\tiny{5}};
\node at (11.5,3.5) {\tiny{6}};

\node at (13.2,4.57) {\tiny{1}};
\node at (13.28,4.38) {\tiny{2}};
\node at (13.28,4.25) {\tiny{4}};
\node at (13.28,4.1) {\tiny{7}};
\node at (13.28,3.85) {\tiny{3}};
\node at (13.38,3.7) {\tiny{5}};
\node at (13.28,3.54) {\tiny{6}};
\node at (13.38,3.43) {\tiny{8}};
\node at (13.38,3.21) {\tiny{9}};

\node at (9.6,1.22) {\tiny{1}};
\node at (9.6,0.96) {\tiny{2}};
\node at (9.6,0.75) {\tiny{3}};

\node at (11.5,1.45) {\tiny{1}};
\node at (11.5,1.27) {\tiny{2}};
\node at (11.5,1.1) {\tiny{4}};
\node at (11.5,0.9) {\tiny{3}};
\node at (11.5,0.72) {\tiny{5}};
\node at (11.5,0.5) {\tiny{6}};

\node at (13.2,1.57) {\tiny{1}};
\node at (13.28,1.38) {\tiny{2}};
\node at (13.28,1.25) {\tiny{4}};
\node at (13.28,1.1) {\tiny{7}};
\node at (13.28,0.85) {\tiny{3}};
\node at (13.38,0.7) {\tiny{5}};
\node at (13.28,0.54) {\tiny{6}};
\node at (13.38,0.43) {\tiny{8}};
\node at (13.38,0.21) {\tiny{9}};

\node at (6,10.7) {$C $};
\node at (12,10.7) {$B' $};
\node[very thick] at (6,0.2) {\vdots};
\node[very thick] at (12,-0.5) {\vdots};
\end{tikzpicture}
\end{center}
\caption{The ordered premorphism $g\colon C\to B'$.}\label{fig_Chacon2}
\end{figure}

\begin{example}\label{exa_conj}
Consider the ordered premorphism $g\colon C\to B'$ of \cite[Example~4.15]{AEG}.
Its diagram is drawn in Figure~4 of \cite{AEG} and we draw it
here in Figure~\ref{fig_Chacon2} for the convenience of the reader. 
Let us recall that $C$ is the Bratteli diagram of the 
Chacon system  and $B$ is a simple properly ordered Bratteli diagram. 
By Proposition~\ref{AEG_prop}, $g$ induces a factor map
$\alpha\colon X_{B'}\to X_C$.
We use Proposition~\ref{prop_conj} to show that $X_C$ is conjugate
to $X_{B'}$. (In \cite{AEG}, another argument is given to show that these 
systems are conjugate by constructing the inverse of $g$.)
We label the $i$-th level of $C$ by $x_i$ and $y_i$, and that of
$B'$ by $u_i$, $v_i$, and $w_i$, for $i\geq 1$. Let $(\tau_i)_{i\geq 0}$
be the sequence of morphisms induced by $g$ according to Definition~\ref{new}.
For every $i\geq 1$ we have
\[
\tau_{i}(u_i)=x_{i}x_{i}y_{i},\ \ \ \tau_i(v_i)=x_{i}x_{i}y_{i}x_{i}y_{i}y_{i}, 
\ \text{and} \ \ \tau_i(w_i)=x_{i}x_{i}y_{i}x_{i}y_{i}y_{i}x_{i}y_{i}y_{i}.
\]
Put $D_i=\{\tau_{i}(u_i),\tau_{i}(v_i),\tau_{i}(w_i)    \}$.
It is easy to check that $D_i$ is independent.
Moreover, for every $i\geq 1$ we have
\[
\sigma^{C}_{i+1}(x_{i+1})=x_{i}x_{i}y_{i}\ \ \
\text{and} \ \ \
\sigma^{C}_{i+1}(y_{i+1})=x_{i}y_{i}y_{i}.
\]
We see that $D_i$ does not generate both of the words above. So, we go one level down
on  diagram $C$ and compute
\[
\sigma^{C}_{[i,i+2]}(x_{i+2})=x_{i}x_{i}y_{i}x_{i}x_{i}y_{i}x_{i}y_{i}y_{i}\ \ 
\text{and} \ \
\sigma^{C}_{[i,i+2]}(y_{i+2})=x_{i}x_{i}y_{i}x_{i}y_{i}y_{i}x_{i}y_{i}y_{i}.
\]
These two words are generated by $D_i$ as
$\sigma^{C}_{[i,i+2]}(x_{i+2})=\tau_{i}(u_i)\tau_{i}(v_i)$ and 
$\sigma^{C}_{[i,i+2]}(y_{i+2})=\tau_{i}(w_i)$. Also, $D_i$
is a minimal subset of $\{x_i,y_i.z_i\}^{*}$ having this property.
Now, Proposition~\ref{prop_conj} implies that
$C$ is equivalent to $B'$, and so $X_C$ is conjugate
to $X_{B'}$.
\end{example}

%


\end{document}